\documentclass[11pt,reqno,sumlimits]{amsart}

\usepackage{amssymb,amscd,amsmath,epsfig, mathtools}
\usepackage{amsthm}
\usepackage{enumerate}
\usepackage{xcolor}
\usepackage{scalerel}
\usepackage{soul}
\usepackage{tikz-cd}
\usepackage{cancel}

\usepackage{ulem}

\usepackage[margin=1.0in]{geometry}

\newtheorem{theorem}{Theorem}[section]
\newtheorem*{theorem*}{Theorem}
\newtheorem{definition}{Definition}[section]

\newtheorem{lemma}{Lemma}[section]
\newtheorem{proposition}{Proposition}[section]
\theoremstyle{definition}
\newtheorem{remark}{Remark}[section]

\newcommand{\R}{\mathbb R}

\newcommand{\calC}{\mathcal C}
\newcommand{\calL}{\mathcal L}
\newcommand{\dvol}{ d\text{vol}_{g}}

\newcommand{\vol}{{\rm vol}}

\raggedbottom

\begin{document}

\title{Solving the Yamabe Problem by an Iterative Method on a Small Riemannian Domain}
\author[S. Rosenberg]{Steven Rosenberg}\address{
Department of Mathematics and Statistics, Boston University, Boston, MA, USA}
\email{sr@math.bu.edu}
\author[J. Xu]{Jie Xu}
\address{
Department of Mathematics and Statistics, Boston University, Boston, MA, USA}
\email{xujie@bu.edu}

\date{}							

 \begin{abstract} We introduce an iterative scheme to solve the Yamabe equation $ - a\Delta_{g} u + S u = \lambda u^{p-1} $
 on small domains $\Omega$ inside a compact Riemannian manifold $(M,g)$. 
 Thus $g$ admits a conformal change to a constant scalar curvature metric.  The proof does not use the traditional 
 functional minimization. 
 \end{abstract}

 \maketitle

 \section{Introduction}
In this paper, we  solve the Yamabe equation on small domains $\Omega$ inside a compact Riemannian manifold $(M,g)$. 
We introduce an iterative method developed for hyperbolic operators \cite{Hintz}, \cite{HVasy} and elliptic operators  \cite{Xu}, with a long history in  PDE theory dating back to \cite{Moser1, Moser2}.    Our method is different from the usual Euler-Lagrange approach to the Yamabe problem.  

For a brief history, in 1960  Yamabe proposed the following generalization of the classical uniformization theorem for surfaces:
\medskip

\noindent
{\bf The Yamabe Conjecture.} {\it Given a compact Riemannian manifold $ (M, g) $ of dimension $ n \geqslant 3 $, there exists a metric conformal to $ g $ with constant scalar curvature.}       
\medskip
  
Let $ S = S_g$ be the scalar curvature  of $g$, and let $ \tilde{S} $ be the scalar curvature  
of the conformal metric $ \tilde{g} = e^{2f} g $.
Set $ e^{2f} = u^{p-2} $, where $ p = \frac{2n}{n - 2} $ and $u>0.$ Then
\begin{equation}\label{intro:eqn1}
\tilde{S} = u^{1-p}\left(-4 \cdot \frac{n-1}{n-2} \Delta_{g} u + Su\right), 
\end{equation}
where the Laplacian $\Delta_g = -d^*d$ is negative definite.
Setting $ a = 4 \cdot \frac{n-1}{n-2} > 0 $, we have that 
$ \tilde{g} = u^{p-2} g $ has constant scalar curvature $ \lambda $ if and only if $ u $ satisfies the Yamabe equation  
\begin{equation}\label{intro:eqn2}
-a\Delta_{g} u + Su = \lambda u^{p-1},
\end{equation}

The solution of the Yamabe conjecture for closed manifolds involved three major steps
(see \cite{PL} for a thorough treatment):
\begin{enumerate}[1.]
\item Yamabe, Trudinger and Aubin proved that if the minimum 
of the Yamabe functional $Y(g) = \int_M S\ \dvol/ ({\rm vol}(M,g)) ^{(n-2)/n}$   
on a conformal class of 
metrics  on a closed manifold $ (M, g) $ is smaller than the minimum 
on the conformal class of the standard metric on $ \mathbb{S}^{n} $, 
then (\ref{intro:eqn2}) has a solution;
\item Aubin then used Step 1 to prove  
that 
if $ \dim M \geqslant 6 $ and
$ (M,g) $ is not locally conformally flat, then (\ref{intro:eqn2}) has a solution;
\item 
Finally, Schoen used the  
positive mass theorem to prove that (\ref{intro:eqn2}) has a solution if $ M $ has dimension $ 3, 4, 5 $ or is locally conformally flat, and $ M $ is not conformal to the standard sphere. 
\end{enumerate}
There are also results for manifolds with boundary \cite{Brendle, Escobar2, Escobar, Marques} and open manifolds \cite{Aviles-McOwen, Grosse, PDEsymposium} with certain restrictions.

In contrast, our methods treat small domains in all dimensions greater than two.  (To be honest, there is one place in the proof of Theorem \ref{Euclidean:thm6} where an easy estimate depends on the dimension.)  
The main result is: 
\bigskip

\begin{theorem*} Let $\Omega$ be a connected domain with smooth boundary in
the interior of a compact Riemannian  manifold $(M^n,g), n \geqslant 3$. 
If the volume  and  diameter  of $\Omega$ are sufficiently small, then there is a conformal change $\tilde g = u^{p-2}g$ of $g$ to a constant scalar curvature metric.  On $\partial\Omega$, we can arrange that 
$\tilde g = g.$
\end{theorem*}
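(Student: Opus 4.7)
The plan is to reformulate the Yamabe equation as a fixed-point problem with homogeneous Dirichlet data and to realize the solution as the limit of a linearized iterative scheme converging by Banach's fixed-point theorem.

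First, to enforce $\tilde g|_{\partial\Omega}=g$, I would substitute $u=1+v$ with $v|_{\partial\Omega}=0$ and treat the desired constant scalar curvature $\lambda$ as a free parameter. Equation \eqref{intro:eqn2} becomes
\begin{equation*}
L v \;:=\; -a\Delta_g v + S v \;=\; \lambda(1+v)^{p-1} - S, \qquad v|_{\partial\Omega}=0.
\end{equation*}
The iteration is $v_0=0$ and $v_{k+1}=\Phi(v_k)$, where $\Phi(v):=L^{-1}\bigl(\lambda(1+v)^{p-1}-S\bigr)$ and $L^{-1}$ denotes the Dirichlet solution operator.

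For the linear theory, the first Dirichlet eigenvalue of $-\Delta_g$ is bounded below by a constant times the inverse square of the Euclidean diameter of $\Omega$. Thus on a sufficiently small domain, $L$ with zero boundary data is positive and invertible by Lax--Milgram, and standard elliptic regularity promotes this to a bounded inverse $T:=L^{-1}:C^{0,\alpha}(\bar\Omega)\to C^{2,\alpha}_0(\bar\Omega)$ whose operator norm can be driven to zero by shrinking the diameter (and, where conversions between $L^q$- and H\"older-type norms appear, the $g$-volume).

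I would then restrict $\Phi$ to the ball $B_R=\{v\in C^{2,\alpha}_0(\bar\Omega):\|v\|_{C^{2,\alpha}}\le R\}$ with $R<1$, so that $1+v$ stays bounded away from $0$. On $B_R$, $v\mapsto(1+v)^{p-1}$ is Lipschitz with constant depending only on $R$, giving
\begin{equation*}
\|\Phi(v_1)-\Phi(v_2)\|_{C^{2,\alpha}} \;\le\; C(R)\,\|T\|\,\|v_1-v_2\|_{C^{2,\alpha}},
\end{equation*}
while $\Phi(0)=T(\lambda-S)$ can be made arbitrarily small by the same mechanism. Both $\Phi(B_R)\subseteq B_R$ and the contraction condition $C(R)\|T\|<1$ hold once the volume and diameter of $\Omega$ are small. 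Banach's theorem then supplies a fixed point $v\in B_R$, and $u=1+v>0$ produces the conformal metric $\tilde g=u^{p-2}g$ with constant scalar curvature $\lambda$ and $\tilde g|_{\partial\Omega}=g$.

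The main obstacle is extracting from the linear theory operator-norm estimates on $T$ that are quantitatively sharp in the geometric parameters of $\Omega$. One needs the decay of $\|T\|$ in the diameter and volume to outrun both the $C^{0,\alpha}$-norm of the source term $S$ and the $R$-dependent Lipschitz constant of the nonlinearity on $B_R$; this is delicate because the rescaling behavior of H\"older (and Sobolev) norms depends on $n$, which is presumably why the dimension enters an estimate further in the paper.
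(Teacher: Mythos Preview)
Your contraction-mapping outline is appealing---substituting $u=1+v$ makes positivity automatic and bypasses the paper's entire \S3---but it rests on a claim that is not true as stated. The Schauder inverse $T=L^{-1}:C^{0,\alpha}(\bar\Omega)\to C^{2,\alpha}_0(\bar\Omega)$ does \emph{not} have operator norm tending to zero as the domain shrinks. A rescaling $x\mapsto rx$ of $-\Delta u=f$ on a ball of radius $r$ shows that the top seminorm satisfies $[D^2u]_{\alpha}\lesssim r^{-\alpha}\|f\|_{C^{0,\alpha}}$, so $\|T\|_{C^{0,\alpha}\to C^{2,\alpha}}$ is of order $r^{-\alpha}$ and actually \emph{blows up}. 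What does shrink is $\|T\|_{C^{0,\alpha}\to C^{0}}$ (order $r^{2}$); but in your scheme the fixed point lives in $C^{2,\alpha}$, and in particular the self-mapping estimate $\|\Phi(0)\|_{C^{2,\alpha}}=\|T(\lambda-S)\|_{C^{2,\alpha}}\le R<1$ cannot be obtained just by making $\Omega$ small. Indeed, if $w=T(-S)$ then $-a\Delta_g w=-S(1+w)$, so $|D^2w|$ is of order $|S|/a$ independently of the diameter; the claim that ``$\Phi(0)$ can be made arbitrarily small by the same mechanism'' is where the argument breaks. You could try to repair this by running the fixed point in $C^0$ (or a weighted, scale-invariant H\"older space) and bootstrapping afterwards, together with choosing $|\lambda|$ small for the contraction and controlling $\|T(-S)\|$ directly via a rescaling of $g$; but that is a different argument from the one you wrote.

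For comparison, the paper does not attempt to make any $C^{2,\alpha}\to C^{2,\alpha}$ or $L^2\to H^2$ inverse small. Its iteration puts $aI-a\Delta_g$ (not $-a\Delta_g+S$) on the left and works in Sobolev spaces: uniform $H^2$-bounds on $u_k$ come from the elliptic estimate \emph{combined with} the Poincar\'e factor $\lambda_1^{-1/2}\to 0$ on the $L^2$ pieces and the freedom to take $|\lambda|$ small on the nonlinear piece (cf.\ (\ref{Euclidean:eqn26}), (\ref{2nd est})); the Cauchy property is then established in $H^1$, where Poincar\'e gives genuine smallness. Smoothness is a separate bootstrap (Theorem~\ref{Euclidean:thm6}), and positivity---which your scheme would get for free---requires the substantial maximum-principle argument of \S3. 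The boundary normalization $\tilde g|_{\partial\Omega}=g$ is obtained a posteriori by rescaling $u\mapsto c^{-1}u$ (Remark~\ref{final remarkk}).
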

\bigskip

In contrast to seeking a minimum of the Yamabe functional, we do not assume {\it a priori} that $u$ is positive, so the nonlinear term $u^{p-1}$ in (\ref{intro:eqn2}) may not be well defined. Instead, we first find a weak solution to
\begin{equation}\label{eq:abs}
-a\Delta_{g} u + Su = \lambda \lvert u \rvert^{p-1},
\end{equation}
with $u\in H^{1}(\Omega) $ satisfying a Dirichlet condition (Theorem \ref{Euclidean:thm1}). We then prove that $u$  is positive (Theorem \ref{negative:thm1}), so $u$ solves (\ref{intro:eqn2}) weakly.  We finally prove that $u$ is smooth (Theorem \ref{Euclidean:thm6}).  Therefore, this $u$ produces a solution to the Yamabe problem with Dirichlet data.  It is intriguing that the variant (\ref{eq:abs}) of the Yamabe equation is related to the 
nonlinear Schr\"odinger equation with energy-critical exponent; see
Remark \ref{REV1RE}.

The proof has  technical advantages over previous proofs: 
(i) Yamabe obtained  the Yamabe equation (\ref{intro:eqn2}) as the Euler-Lagrange equation of $Y(g)$, while we 
solve (\ref{intro:eqn2}) directly, without discussing whether a minimum of $Y(g)$ exists; 
(ii) In contrast to Yamabe and Trudinger's arguments, which treated the  subcritical case $s< p$ of  (\ref{intro:eqn2})
 before passing to the limit $s=p$, we work directly with (\ref{intro:eqn2}); (iii)  We are able to fix the boundary geometry, in the sense that the boundary metric is unchanged.
 The main disadvantage is that because we work with (\ref{intro:eqn2}) directly, we cannot assume that $u$ is positive as in previous approaches; the proof of positivity requires a separate argument.

The paper is organized as follows. In \S2, we apply the iterative method to solve (\ref{eq:abs})
on a small bounded domain $ \Omega$ with 
constant Dirichlet boundary conditions (Theorem \ref{Euclidean:thm1}).  The size of $\Omega$ is determined in the proof.  
The main technical difficulty is that the nonlinearity in (\ref{intro:eqn2}) involves the function $x^{p-1}$, which is not globally Lipschitz on $\R^+$; the easier case of an elliptic equation with globally Lipschitz nonlinearity is treated in \cite{Xu}.  The added difficulty is handled by familiar techniques: 
 the Gagliardo-Nirenberg inequality, the Poincar\'e inequality, Li-Yau estimates for the first eigenvalue of $\Delta_g$, and elliptic estimates.  The solution obtained is a weak solution in the Sobolev space $H^1(\Omega,g).$  
 
 In \S3,  we prove that  our solution of  (\ref{eq:abs}) also
solves (\ref{intro:eqn2}),
by proving that the iterative method leads to a positive solution to the Yamabe equation (Theorem \ref{negative:thm1}).
We also prove that the solution is in fact smooth (Theorem \ref{Euclidean:thm6}), using arguments adapted from Yamabe and Trudinger's work in the subcritical case.  With these results, we finally conclude that we can solve the Yamabe problem on a small domain.

Appendix A proves a technical result from \S3, and Appendix B gives a table of the constants used in the article.

We are very grateful to the referee for suggesting we study (\ref{eq:abs}) instead of the more standard (\ref{intro:eqn2}), as this avoids a restriction on dim$(\Omega)$ we had to treat in a previous version.

\section{The Yamabe problem on a Riemannian domain} 
In this section, we start with an open, bounded subset $ \Omega$ 
inside a compact Riemannian manifold $(M,g)$, where $ \bar \Omega $ is a smooth manifold with boundary. We apply an iterative method to find a weak solution $u$ to  the 
PDE (\ref{eq:abs}) on $\Omega$ with constant Dirichlet boundary conditions (Theorem \ref{Euclidean:thm1}). In \S3, we prove that $u$ is in fact positive and smooth.

We call $(\Omega, g)$ a {\it Riemannian domain}. We assume that ${\rm diam}_g(\Omega)$ 
is less than $r_{inj}$, the injectivity radius of $M$, so $\Omega$ lies inside a coordinate chart on $M$.  Thus we can consider $\Omega$ to be a domain in $\R^n$, where we always assume $n\geqslant 3$.

On $\Omega$, we  have $ g = g_{ij} dx^{i}\otimes dx^{j} $ 
in the standard coordinates on $\R^n$, with volume form 
$d\text{vol}_{g} = \sqrt{ \det(g_{ij}) } dx_{1} \dotso dx_{n}.$
 $( v,w)_g$ and $|v|_g = (v,v)_g^{1/2}$ denote the inner product and norm  with respect to $g$.

We have 
  geometric quantities (diam${}_g(\Omega)$, vol${}_g(\Omega)$,
  etc.) on $\Omega$  and the same quantities associated to the Euclidean metric when we consider $\Omega\subset \R^n$. As we shrink $\Omega$, diam${}_g(\Omega)\to 0$ iff diam${}_E(\Omega)\to 0$ in the obvious notation, and the same holds for other geometric quantities.  Thus a 
statement like ``for sufficiently small volume" refers to either
the $g$-volume or the Euclidean volume,  denoted just by vol$(\Omega)$.
  
  Similarly,
we define two equivalent versions of the $\calL^p$ norms and  two equivalent versions of the Sobolev norms on $ \Omega $. 
\begin{definition}\label{Euclidean:def1} Let $ (\Omega, g) $ be a Riemannian domain.
For  real valued functions $u$, we set: 

(i) 
For $1 \leqslant p < \infty $,
\begin{align*}
\mathcal{L}^{p}(\Omega)\ &{\rm is\ the\ completion\ of}\  
\calC^{\infty}(\Omega)\ {\rm with\ respect\ to\ the\ norm}\
\Vert u\Vert_p^p :=\int_{\Omega} \lvert u \rvert^{p} dx < \infty,\\
\mathcal{L}^{p}(\Omega, g)\ &{\rm is\ the\ completion\ of}  \ 
\calC^{\infty}(\Omega)\ {\rm with\ respect\ to\ the\ norm}\
\Vert u\Vert_{p,g}^p :=\int_{\Omega} \left\lvert u \right\rvert^{p} d{\rm vol}_{g} < \infty.
\end{align*}

 (ii) For $\nabla$ the Levi-Civita connection of $g$, and for 
$ u \in \calC^{\infty}(\Omega) $,
\begin{equation}\label{Euclidean:eqn3}
\lvert \nabla^{k} u \rvert_g^{2} := (\nabla^{\alpha_{1}} \dotso \nabla^{\alpha_{k}}u)( \nabla_{\alpha_{1}} \dotso \nabla_{\alpha_{k}} u).
\end{equation}
\noindent In particular, $ \lvert \nabla^{0} u \rvert^{2}_g = \lvert u \rvert^{2}_g $ and $ \lvert \nabla^{1} u \rvert^{2}_g = \lvert \nabla u \rvert_{g}^{2}.$\\

 (iii) 
For $ s \in \mathbb{N}, 1 \leqslant p < \infty $,
\begin{align}\label{Euclidean:eqn2}
W^{s, p}(\Omega)\   {\rm is\ the\ completion\ of}\ C^\infty(\Omega)\ {{\rm with\ respect\ to\ the\ norm}\ 
\lVert u \rVert_{W^{s,p}(\Omega)}^{p} : = \int_{\Omega} \sum_{j=0}^{s} \left\lvert D^{j}u \right\rvert^{p} dx,} \\
W^{s, p}(\Omega, g) 
\ {\rm is\ the\ completion\ of}\ C^\infty(\Omega)\ {{\rm with\ respect\ to\ the\ norm}\ 
\lVert u \rVert_{W^{s,p}(\Omega,g)}^{p} : = \int_{\Omega} \sum_{j=0}^{s} \left\lvert D^{j}u \right\rvert^{p}_g  d{\rm vol}_{g}} 
\nonumber.
\end{align}
\noindent Here $ \lvert D^{j}u \rvert^{p} := \sum_{\lvert \alpha \rvert = j} \lvert \partial^{\alpha} u \rvert^{p} $ 
in the weak sense. Similarly, $ W_{0}^{s, p}(\Omega) $ is the completion of $ \calC_{c}^{\infty}(\Omega) $ with respect to the 
$ W^{s, p} $-norm.
In particular, $ H^{s}(\Omega) : = W^{s, 2}(\Omega) $, $ H^{s}(\Omega, g) : = W^{s, 2}(\Omega, g) $ and $ \calL^{p}(\Omega, g) : = W^{0, p}(\Omega, g) $ are the usual Sobolev spaces. We 
  similarly define $H_0^{s}(\Omega), H^s_0(\Omega,g)$, $ \calL_{0}^{p}(\Omega, g) = \calL^p(\Omega,g) $.
\end{definition}

\begin{remark}\label{Euclidean:re1}  It is clear that the two $\calL^p$ norms are equivalent, the two $H^s$ norms are equivalent, and the two $ W^{s, p} $ norms are equivalent  on $\Omega.$ Thus there are constants $C_2> C_1>0$ such that
\begin{equation}\label{Euclidean:eqn5}
\begin{split}
C_{1} \lVert u \rVert_{H^{s}(\Omega)} & \leqslant \lVert u \rVert_{H^{s}(\Omega, g)} \leqslant C_{2} \lVert u \rVert_{H^{s}(\Omega)} \\
C_{1} \lVert u \rVert_{W^{s,p}(\Omega)} & \leqslant \lVert u \rVert_{W^{s, p}(\Omega, g)} \leqslant C_{2} \lVert u \rVert_{W^{s, p}(\Omega)} \\
C_{1} \lVert u \rVert_{\mathcal{L}^{p}(\Omega)} & \leqslant \lVert u \rVert_{\mathcal{L}^{p}(\Omega, g)} \leqslant C_{2} \lVert u \rVert_{\mathcal{L}^{p}(\Omega)}.
\end{split}
\end{equation}
In Riemannian normal coordinates centered at $p\in \Omega$, $g$ agrees with the Euclidean metric up to terms of order $O(r^2)$, where $r$ is the distance to $p$.  Thus
there exists a neighborhood 
$U_p$ of $p$ on which we may assume $C_1 \geq 1/2, C_2 \leq 2$ in (\ref{Euclidean:eqn5}) for $u\in C^\infty_c(U_p).$ 
Since we will eventually assume that the diameter of $\Omega$ is sufficiently small, and since  $C_2/C_1$ for $\Omega'$ is smaller than $C_2/C_1$ for $\Omega$ when $\Omega'\subset \Omega$, we can assume that $C_2/C_1 \in [1, 4].$
\end{remark}

As in the Introduction, we consider the boundary value problem:
\begin{equation}\label{Euclidean:eqn1}
-a\Delta_{g} u + Su  = \lambda |u|^{p-1} \; {\rm in} \; \Omega; u  = c > 0 \; {\rm on} \; \partial \Omega.
\end{equation}
\noindent Here $ a = \frac{4(n -1)}{n -2}, p = \frac{2n}{n-2} $, $ S $ is the scalar curvature of $ g $, and $ c $ is a fixed positive constant. $\lambda$ is an unspecified  constant.

The main tools used to solve 
(\ref{Euclidean:eqn1}) 
are (i) the version of the Gagliardo-Nirenberg (GN) 
 interpolation inequality for the zero trace case; (ii) a version of the extension theorem; (iii) 
the Poincar\'e inequality with respect to Laplace-Beltrami operator. We recall these results.

\begin{proposition}\label{Euclidean:prop1} {\bf(GN trace zero case)} \cite[Thm.~3.70]{Aubin} 
Let $ q, r,l $ be real numbers  with $ 1 \leqslant q, r,l \leqslant \infty $, and let $ j, m $ be integers with $ 0 \leqslant j < m $.
Define  $\alpha $ by solving
\begin{equation}\label{Euclidean:eqn7}
\frac{1}{l} = \frac{j}{n} + \alpha \left(\frac{1}{r} - \frac{m}{n}\right) + \frac{1- \alpha}{q},
\end{equation}
as long as $l >0.$ If $\alpha \in \left[\frac{j}{m},1\right]$, then
there exists a constant $ C_{m, j, q, r} $, 
depending only on $ n, m , j , q, r,  \alpha $ 
such that for all $ u \in \calC_{c}^{\infty}(\R^n),$
\begin{equation}\label{Euclidean:eqn6}
\lVert \nabla^{j} u \rVert_{\calL^\ell(\R^n)} \leqslant C_{m, j, q, r, \alpha} \lVert \nabla^{m} u \rVert_{\calL^r(\R^n)}^{\alpha} \lVert u \rVert_{\calL^q(\R^n)}^{1 - \alpha}.
\end{equation}
(If $ r = \frac{n}{m - j} \neq 1 $, then (\ref{Euclidean:eqn6}) is not valid for $ \alpha = 1 $.)
\end{proposition}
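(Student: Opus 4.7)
The result is the classical Gagliardo-Nirenberg interpolation inequality on $\R^n$, cited here from Aubin; my plan is to proceed in three stages: forcing the exponent relation by scaling, establishing the two endpoint cases, and then interpolating.

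First, I would verify by the dilation $u \mapsto u(\lambda x)$ that both sides of (\ref{Euclidean:eqn6}) scale as explicit powers of $\lambda$; matching the two scaling exponents reproduces precisely (\ref{Euclidean:eqn7}), showing that this relation is \emph{forced} rather than assumed. The admissible range $\alpha \in [j/m, 1]$ then arises by requiring both $\alpha$ and $1-\alpha$ to remain non-negative once (\ref{Euclidean:eqn7}) determines $\ell$ in terms of the other exponents.

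Next I would handle the two endpoints. At $\alpha = 1$, the claim is the Sobolev embedding $\lVert \nabla^j u \rVert_{\calL^\ell(\R^n)} \leqslant C \lVert \nabla^m u \rVert_{\calL^r(\R^n)}$ with $1/\ell = 1/r - (m-j)/n$. For $r = 1$ this follows from the one-dimensional representation $u(x) = \int_{-\infty}^{x_i} \partial_i u \, ds$ combined with a standard product trick on the components; for $1 < r < n/(m-j)$ it follows by iterating $W^{1,r} \hookrightarrow L^{r^*}$ with $1/r^* = 1/r - 1/n$, via the Riesz potential representation and the Hardy-Littlewood-Sobolev inequality. The exceptional case $r = n/(m-j)$ is exactly where this embedding fails, which explains the parenthetical exclusion of $\alpha = 1$. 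At the opposite endpoint $\alpha = j/m$, the base case $j=1,\, m=2$ follows by integrating $\int |\nabla u|^\ell \, dx = -\int u \cdot \mathrm{div}(|\nabla u|^{\ell-2} \nabla u)\, dx$ by parts (no boundary term, since $u \in \calC_c^\infty$), and then applying H\"older to produce $\lVert \nabla u \rVert_\ell^2 \leqslant C \lVert u \rVert_q \lVert D^2 u \rVert_r$; iterating this on successive derivatives of $u$ (or equivalently on $\nabla^{j-1} u$ after trivial commutation of partials on $\R^n$) gives the full $\alpha = j/m$ statement.

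For intermediate $\alpha \in (j/m, 1)$, I would interpolate between the two endpoint inequalities using the log-convexity of $t \mapsto \log \lVert \nabla^j u \rVert_{\calL^{\ell(t)}(\R^n)}$ along the GN scale determined by (\ref{Euclidean:eqn7}), which is essentially H\"older's inequality organized as a Riesz-Thorin-type interpolation between $\calL^q(\R^n)$ and $W^{m,r}(\R^n)$. I expect the $\alpha = j/m$ iteration to be the main obstacle: repeatedly applying the one-derivative identity, commuting derivatives, and absorbing cross terms via Young's inequality requires careful combinatorial book-keeping of exponents to ensure that the intermediate norms match (\ref{Euclidean:eqn7}) at each stage. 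Once both endpoints are in hand the interpolation step is largely algebraic.
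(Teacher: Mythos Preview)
The paper does not supply its own proof of this proposition: it is quoted directly from Aubin with a citation and no argument, since it serves only as a tool for Theorem~\ref{Euclidean:thm1}. So there is no ``paper's proof'' to compare against; your outline is a from-scratch sketch of the classical argument, and as such it is broadly sound and follows the standard route (scaling to identify the exponent relation, Sobolev embedding at $\alpha=1$, integration-by-parts at the lower endpoint, H\"older-type interpolation in between).

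Two small points in your sketch deserve tightening. First, your justification of the range $\alpha\in[j/m,1]$ is not right: requiring $\alpha$ and $1-\alpha$ nonnegative only gives $[0,1]$. The lower cutoff $j/m$ is not a consequence of scaling but of the inductive structure of the proof --- the one-derivative inequality $\lVert\nabla v\rVert_\ell^2\leqslant C\lVert v\rVert_q\lVert D^2 v\rVert_r$ applied to $v=\nabla^{j-1}u$ propagates upward and forces $\alpha\geqslant j/m$ at each stage. Second, the interpolation step as you describe it (``log-convexity of $t\mapsto\log\lVert\nabla^j u\rVert_{\calL^{\ell(t)}}$'') is too loose: you are not interpolating a single family of norms of one fixed quantity, but rather combining two distinct inequalities with different right-hand sides. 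The clean way is to write $\lVert\nabla^j u\rVert_\ell$ via H\"older as a product of two $\calL$-norms of $\nabla^j u$ at the endpoint exponents, and then apply the two endpoint estimates separately; the algebra of (\ref{Euclidean:eqn7}) is exactly what makes the exponents match. With those corrections your plan would go through.
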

\begin{remark}\label{Euclidean:reNG}
For fixed $ n, m, j, q, r, \alpha $, we can leave $C_{m, j, q, r, \alpha}$  unchanged in (\ref{Euclidean:eqn6}) 
if we restrict the support of $ u $ to a domain.  
\end{remark}

\begin{proposition} {\bf (Extension Operator)} \label{Euclidean:propext}\cite[Thm.~5.22]{Adams}
Let $ \Omega $ be a bounded, open, connected subset of $ \R^{n} $ with smooth boundary. Then there exists a bounded linear operator $ E: W^{k, p}(\Omega)\to W^{k, p}(\R^n)$, the extension operator,
such that $Eu$ has compact support, $Eu = u$ a.e. on $\Omega$, and
\begin{equation}\label{Euclidean:ext}
\lVert Eu \rVert_{W^{k, p}(\R^{n})}  \leqslant K(k, p, \Omega) \lVert u \rVert_{W^{k, p}(\Omega)}.
\end{equation}
\end{proposition}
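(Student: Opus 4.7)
The plan is to reduce the global extension to a local model problem near the boundary, where the boundary can be flattened to a half-space, and then to construct an explicit reflection extension that preserves $W^{k,p}$-regularity. Since $\partial\Omega$ is smooth and compact, I can choose finitely many open sets $U_1,\dots,U_N$ covering $\partial\Omega$ such that on each $U_i$ there is a smooth diffeomorphism $\Phi_i:U_i\to B$ (an open ball in $\R^n$) sending $U_i\cap\Omega$ to the upper half-ball $B^+=B\cap\{x_n>0\}$ and $U_i\cap\partial\Omega$ to $B\cap\{x_n=0\}$. Add an open set $U_0\Subset\Omega$ so that $\{U_0,U_1,\dots,U_N\}$ covers $\bar\Omega$, and pick a smooth partition of unity $\{\eta_i\}_{i=0}^N$ subordinate to this cover. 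Writing $u=\sum_i \eta_i u$, it suffices to extend each $\eta_i u$ separately and sum the results, since the $W^{k,p}$-norm is subadditive (up to a constant depending on the partition).

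For the interior piece $\eta_0 u$, extension by zero works, and multiplication by a smooth cutoff that is $1$ on $\mathrm{supp}\,\eta_0$ and has compact support yields the required bound. For the boundary pieces, transport $\eta_i u$ via $\Phi_i$ to a function $v$ on $B^+$, compactly supported in $B^+\cup(B\cap\{x_n=0\})$. The $W^{k,p}$-norm is comparable to the $W^{k,p}$-norm in the original coordinates, with comparability constant depending on the $C^k$-norms of $\Phi_i$ and $\Phi_i^{-1}$. I then use a higher-order reflection: define
\[
\tilde v(x',x_n)=\begin{cases} v(x',x_n), & x_n\geqslant 0,\\[2pt] \sum_{j=1}^{k+1} a_j\, v(x',-jx_n), & x_n<0,\end{cases}
\]
where the coefficients $a_j$ solve the linear system $\sum_{j=1}^{k+1} (-j)^m a_j=1$ for $m=0,1,\dots,k$. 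This Vandermonde-type system has a unique solution, and the chosen $a_j$ guarantee that all partial derivatives of order $\leqslant k$ match across $x_n=0$, so $\tilde v\in W^{k,p}(\R^n)$ with $\|\tilde v\|_{W^{k,p}(\R^n)}\leqslant C(k)\|v\|_{W^{k,p}(B^+)}$.

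Transporting $\tilde v$ back through $\Phi_i^{-1}$, extending by zero outside $U_i$, and multiplying by a fixed smooth cutoff $\chi$ that equals $1$ on a neighborhood of $\bar\Omega$ and has compact support produces $E_i(\eta_i u)\in W^{k,p}(\R^n)$ with compact support and agreeing with $\eta_i u$ on $\Omega$. Summing $Eu:=\sum_{i=0}^N E_i(\eta_i u)$ gives a bounded linear operator $E:W^{k,p}(\Omega)\to W^{k,p}(\R^n)$ with the required properties, and the operator norm $K(k,p,\Omega)$ depends only on $k$, $p$, the $C^k$-norms of the charts $\Phi_i$, the partition $\{\eta_i\}$, and the cutoff $\chi$.

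The main obstacle is keeping track of the constants that arise when straightening the boundary and composing with $\Phi_i$, since derivatives of order up to $k$ of the diffeomorphism enter into the norm comparison. For smooth $\partial\Omega$ this is only a bookkeeping issue, but it is the step that forces $K$ to depend on $\Omega$, rather than being a universal constant. The only genuinely nontrivial ingredient is the solvability of the Vandermonde system for $a_j$, which is elementary.
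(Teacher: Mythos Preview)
Your argument is the standard and correct proof of the Sobolev extension theorem via boundary straightening, partition of unity, and higher-order reflection; it is essentially the proof in Adams' book. Note, however, that the paper does not prove this proposition at all: it is simply quoted from \cite[Thm.~5.22]{Adams} and used as a black box, so there is no ``paper's own proof'' to compare against.
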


If $\Omega$ is fixed, we write $K(k,p,\Omega) = K(k,p).$ Note that $K(k,p) \geq 1.$

\begin{proposition} \label{Euclidean:prop2}\cite{Li} {\bf (Poincar\'e inequality)} Let $ (\bar{M}, g) $ be a compact manifold with smooth boundary and with interior $M$. Let $ \lambda_{1} $ be the first non-zero eigenvalue of  $-\Delta_{g} $ on $ u \in H_0^1(M, g) $.
We have
\begin{equation}\label{Euclidean:eqn8}
\left\lVert u \right\rVert_{L^{2}(M, g)} \leqslant \lambda_{1}^{-1 \slash 2} \left\lVert \nabla_{g} u \right\rVert_{L^{2}(M, g)}.
\end{equation}
Moreover, $ \lambda_1^{-1/2} $ is the optimal constant for which (\ref{Euclidean:eqn8}) holds.
\end{proposition}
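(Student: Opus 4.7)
The plan is to derive the Poincar\'e inequality from the spectral decomposition of the Dirichlet Laplacian on $(\bar M, g)$. First I would invoke standard elliptic theory on compact manifolds with boundary to show that $-\Delta_g$ with Dirichlet boundary conditions is a positive self-adjoint operator on $L^2(M,g)$ whose inverse (sending $f \in L^2$ to the unique weak solution of $-\Delta_g v = f$ with $v|_{\partial M} = 0$) is compact via the Rellich--Kondrachov theorem applied to $H^1_0(M,g) \hookrightarrow L^2(M,g)$. The spectral theorem then yields an $L^2$-orthonormal basis $\{\phi_k\}_{k \geq 1}$ of smooth eigenfunctions (smoothness by elliptic regularity up to the boundary) with eigenvalues $0 < \lambda_1 \leq \lambda_2 \leq \cdots \to \infty$.

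Next, for any $u \in H_0^1(M,g)$, expand $u = \sum_k c_k \phi_k$ in $L^2$. Approximating $u$ by elements of $\calC^\infty_c(M)$ (by definition of $H_0^1$) and applying Green's identity, whose boundary terms vanish because each $\phi_k$ satisfies the Dirichlet condition, one obtains
\begin{equation*}
\|u\|_{L^2(M,g)}^2 = \sum_{k} c_k^2, \qquad \|\nabla_g u\|_{L^2(M,g)}^2 = -\int_M u\, \Delta_g u \,\dvol = \sum_k \lambda_k\, c_k^2.
\end{equation*}
Bounding $\lambda_k \geq \lambda_1$ gives $\|\nabla_g u\|^2_{L^2(M,g)} \geq \lambda_1 \sum_k c_k^2 = \lambda_1 \|u\|^2_{L^2(M,g)}$, which is the stated inequality after taking square roots. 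For optimality, choose $u = \phi_1$: every intermediate step becomes an equality, so no constant smaller than $\lambda_1^{-1/2}$ can work, and $\phi_1$ actually attains the ratio.

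The main obstacle is really the background spectral theory, i.e.\ verifying self-adjointness of the Dirichlet Laplacian and compactness of its resolvent on a compact Riemannian manifold with smooth boundary; once those ingredients are in place the rest is just a short computation with orthonormal expansions. A completely self-contained alternative would instead characterize $\lambda_1$ variationally as $\lambda_1 = \inf\{\|\nabla_g u\|_{L^2}^2 / \|u\|^2_{L^2} : u \in H^1_0(M,g), u \not\equiv 0\}$, from which (\ref{Euclidean:eqn8}) and its sharpness are immediate; achievability of the infimum then follows from the direct method in the calculus of variations using the compactness embedding.
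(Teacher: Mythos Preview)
Your argument is correct and entirely standard: the spectral decomposition of the Dirichlet Laplacian immediately gives the inequality, and equality for $u=\phi_1$ shows sharpness. The alternative you sketch via the Rayleigh quotient is equally valid and arguably cleaner, since the very definition $\lambda_1 = \inf_{0\neq u\in H^1_0}\|\nabla_g u\|^2_{L^2}/\|u\|^2_{L^2}$ \emph{is} the inequality, with attainment by the first eigenfunction supplying optimality.

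There is no comparison to make with the paper's own proof: the paper does not prove this proposition at all but simply quotes it from \cite{Li} as a known result. So your write-up goes beyond what the paper supplies.
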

To  control  $ \lambda_{1} $ here, we need the following theorem of Li and Yau.
\begin{theorem}\label{Euclidean:eigenthm}\cite[Thm. 7]{LY} Let $ (\bar{M}, g) $ be a compact manifold with smooth boundary, let $ r_{inj} $ be the injectivity radius of $ M $, and let $ h_{g} $ be the minimum of the mean curvature of $ \partial M $. Choose $ K \geq 0$  such that  $ Ric_{g} \geqslant -(n - 1)K $. 
For $\lambda_1$ as in Proposition \ref{Euclidean:prop2}, 
\begin{equation}\label{Euclidean:eigen1}
\lambda_{1} \geqslant \frac{1}{\gamma} \left( \frac{1}{4(n - 1) r_{inj}^{2}} \left( \log \gamma \right)^{2} - (n - 1)K \right),
\end{equation}
where
\begin{equation}\label{Euclidean:eigen2}
\gamma = \max \left\lbrace \exp[{1 + \left( 1 - 4(n - 1)^{2} r_{inj}^{2}K \right)^{\frac{1}{2}}}], \exp[{-2(n - 1)h_{g} r_{inj}} ] \right\rbrace.
\end{equation}
\end{theorem}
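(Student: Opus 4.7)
The plan is to follow the gradient-estimate-plus-geodesic-integration strategy Li and Yau developed for first Dirichlet eigenvalues. Let $u \in H_0^1(M,g)$ be a first eigenfunction, normalized so that $u > 0$ on $M$, $u|_{\partial M} = 0$, and $\sup_M u = 1$. Introduce an auxiliary parameter $\beta > 1$ to be chosen at the end, and define
\[
Q := \frac{|\nabla u|_g^2}{(\beta - u)^2} = |\nabla \log(\beta - u)|_g^2.
\]
A pointwise bound on $Q$ immediately converts into a lower bound for $\lambda_1$ as follows. Pick $p_1 \in M$ with $u(p_1) = 1$ and $p_2 \in \partial M$ with $u(p_2) = 0$, and let $\sigma$ be a minimizing geodesic from $p_1$ to $p_2$; its length is at most $r_{inj}$. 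Integrating $|\nabla \log(\beta - u)|_g$ along $\sigma$ gives
\[
\log\frac{\beta}{\beta - 1} = \int_\sigma |\nabla \log(\beta - u)|_g\, ds \leq \sqrt{Q_{\max}}\,\cdot r_{inj}.
\]

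The main work is to establish a bound of the form $Q_{\max} \leq C_n \beta^{-1}\bigl(\lambda_1 + (n-1)K\bigr)$, with a modification involving $h_g$ if the maximum occurs on $\partial M$. Using the Bochner formula
\[
\tfrac{1}{2}\Delta_g |\nabla u|_g^2 = |\nabla^2 u|_g^2 + \langle \nabla \Delta_g u,\nabla u\rangle_g + \mathrm{Ric}_g(\nabla u,\nabla u),
\]
the eigenvalue equation $\Delta_g u = -\lambda_1 u$, and the curvature hypothesis $\mathrm{Ric}_g \geq -(n-1)K$, I would compute $\Delta_g Q$ and, after absorbing cross terms via Cauchy--Schwarz, obtain an elliptic differential inequality for $Q$ (or for $Q$ augmented by a lower-order term of the form $\alpha\, u/(\beta - u)$ to cancel the bad contribution of $\langle \nabla\Delta_g u,\nabla u\rangle_g$ after using the eigenvalue equation). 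At an interior maximum, the conditions $\nabla Q = 0$ and $\Delta_g Q \leq 0$ reduce this inequality to an algebraic one bounding $Q$ by the claimed linear combination of $\lambda_1$ and $K$. If instead the maximum of $Q$ lies on $\partial M$, the Hopf boundary lemma yields a one-sided bound on the normal derivative of $Q$, and translating it into boundary normal coordinates via the second fundamental form converts it into a bound involving the mean curvature $h_g$; this is the source of the alternative exponential $\exp(-2(n-1)h_g r_{inj})$ in the definition of $\gamma$.

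Combining the $Q$-bound with the geodesic inequality, squaring, and choosing $\beta$ so that $\log(\beta/(\beta-1))$ matches the expression defining $\log \gamma$ produces (\ref{Euclidean:eigen1}); the two branches in the definition of $\gamma$ correspond to whether the interior or the boundary maximum analysis gives the dominant constraint. The main obstacle I anticipate is executing the Bochner computation cleanly enough that the exact constants $1/(4(n-1))$ and $n-1$ emerge in the interior case, since matching them requires tracking the free parameters $\alpha$ and $\beta$ through the Cauchy--Schwarz absorption. The boundary analysis is the other delicate point, because relating $\partial_\nu Q$ at $\partial M$ to $h_g$ requires identifying exactly which terms from the Hopf-lemma inequality contribute to the mean curvature via the Gauss--Weingarten formulas and which are absorbed by the interior estimate.
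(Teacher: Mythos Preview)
The paper does not supply a proof of this statement at all: it is quoted verbatim as \cite[Thm.~7]{LY} and used only as a black box to guarantee that $\lambda_1 \to \infty$ as the domain shrinks (Remark~\ref{Euclidean:re3}(ii)). So there is nothing in the paper to compare your argument against.

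That said, your outline is exactly the Li--Yau gradient-estimate method behind the cited theorem, so you are reconstructing the source rather than giving an alternative. One small point to watch if you carry it out: the bound ``its length is at most $r_{inj}$'' for the geodesic from the interior maximum of $u$ to $\partial M$ is not literally a consequence of the injectivity radius; in Li--Yau the relevant quantity is the diameter (or inscribed radius), and the paper's use of the symbol $r_{inj}$ here is a relabeling of that constant. Otherwise the structure---Bochner on $|\nabla\log(\beta-u)|^2$, interior maximum principle yielding the $\lambda_1$/$K$ bound, boundary Hopf-lemma analysis producing the $h_g$ branch of $\gamma$, then geodesic integration and optimization in $\beta$---is the correct skeleton, and your identification of where the constants must be tracked carefully is accurate.
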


\begin{remark}\label{Euclidean:re3}
(i) We will apply Proposition \ref{Euclidean:prop2} and Theorem \ref{Euclidean:eigenthm} only in the case 
$\bar M = \bar \Omega.$

(ii)
As in Remark \ref{Euclidean:re1}, in Riemannian normal coordinates centered at $p\in \Omega$, $g$ agrees with the Euclidean metric up to terms of order $O(r^2)$. Thus if $\Omega$ is a $g$-geodesic ball of 
small  radius $r$, the mean curvature of $\partial\Omega$ is close to $(n-1)/r$, the mean curvature of a Euclidean $r$-ball in 
$\R^n$.  In (\ref{Euclidean:eigen2}), as $r\to 0$,  $K$ can be taken to be unchanged (since $g$ is independent of $r$), $r_{inj}\to 0$, and $h\cdot r_{inj}\to n-1.$ Thus $\gamma\to e^2$,  the right hand side of 
(\ref{Euclidean:eigen1}) goes to infinity, and   $\lambda_1\to\infty $, as $r\to 0$.

If $\Omega$ is a general Riemannian domain with a small enough injectivity radius, then $\Omega$ sits inside 
a $g$-geodesic ball $\Omega''$ of small radius.  By the Rayleigh quotient characterization of $\lambda_1$, we have $\lambda_1^{\Omega''} \leq \lambda_1^\Omega$.  Thus for all Riemannian domains $(\Omega,g),$ 
$ \lambda_{1}^{-1} \rightarrow 0 $ as the radius of $\Omega$ goes to zero.
\end{remark}

We recall the basic elliptic estimate for the Dirichlet problem.

\begin{theorem}\label{Euclidean:thmaa}\cite[Ch.~5, Thm.~1.3]{T}
Let $ (\Omega, g) $ be a Riemmannian domain, and let $ L $ be a second order elliptic operator of the form $ Lu = -\Delta_{g} u + Xu $ where $ X $ is a first order differential operator with smooth coefficients on $ \bar{\Omega} $. 
For $ f \in \calL^{2}(\Omega, g) $, a solution $ u \in H_{0}^{1}(\Omega, g) $ to $ Lu = f $ in $ \Omega $ 
with $ u \equiv 0 $ on $ \partial \Omega $ belongs to $ H^{2}(\Omega, g) $, and
\begin{equation}\label{Euclidean:eqn9}
\lVert u \rVert_{H^{2}(\Omega, g)} \leqslant C^{*} \left( \lVert f \rVert_{\calL^{2}(\Omega, g)} + \lVert u \rVert_{H^{1}(\Omega, g)} \right).
\end{equation}
$ C^{*} = C^*(L, \Omega, g) $ depends on $ L $ and $ (\Omega, g) $. 
\end{theorem}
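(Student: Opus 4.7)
The plan is to prove this standard $H^2$ regularity estimate by the method of difference quotients, which I would split into an interior estimate and a boundary estimate. Since $\bar\Omega$ is compact and smooth and $L = -\Delta_g + X$ has smooth coefficients on $\bar\Omega$, uniform ellipticity holds with constants depending only on $(L,\Omega,g)$, and by Remark~\ref{Euclidean:re1} it suffices to prove the inequality in the Euclidean Sobolev norms and then convert.

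First I would handle the interior. Pick $\eta \in \calC_c^\infty(\Omega)$ and, for a coordinate direction $k$ and small $h>0$, insert $\varphi = -D_k^{-h}(\eta^2 D_k^h u)$ into the weak formulation $\int_\Omega (\nabla u,\nabla \varphi)_g \dvol + \text{(lower order)} = \int_\Omega f\varphi \dvol$. Summation by parts for difference quotients moves one $D_k^h$ onto $\nabla u$, producing a quadratic form in $D_k^h \nabla u$ whose principal part is coercive by ellipticity. The remaining terms are estimated by Cauchy--Schwarz with a small $\varepsilon$, absorbing $\varepsilon \|\eta D_k^h \nabla u\|_{\calL^2}^2$ into the left side and leaving $C(\|f\|_{\calL^2}^2 + \|u\|_{H^1}^2)$ on the right. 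Letting $h \to 0$ gives $\partial_k \nabla u \in \calL^2_{\mathrm{loc}}$ with the expected bound.

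The boundary piece is the main obstacle. I would use a finite cover of $\partial\Omega$ by coordinate charts and, in each, apply a smooth diffeomorphism $\Psi$ flattening $\partial\Omega$ to $\{x_n = 0\}$; the pulled-back operator retains the divergence-form structure with smooth, uniformly elliptic coefficients, and the Dirichlet condition becomes $u=0$ on $\{x_n=0\}$. For a tangential direction $k<n$, the function $D_k^h u$ again vanishes on the flat boundary, so the same test function $-D_k^{-h}(\eta^2 D_k^h u)$ is admissible in $H^1_0$ and the interior argument goes through verbatim, yielding $\partial_k \partial_j u \in \calL^2$ for every $j$ and every tangential $k$. The missing pure normal derivative $\partial_n^2 u$ is then recovered \emph{algebraically} from the PDE: ellipticity guarantees the coefficient of $\partial_n^2$ is bounded away from zero, so one solves for $\partial_n^2 u$ in terms of $f$, first-order terms, and the already-controlled tangential and mixed second derivatives, with the same $\calL^2$ bound.

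Summing over a partition of unity subordinate to the interior patch and the boundary charts gives the global estimate $\|u\|_{H^2(\Omega)} \leqslant C(\|f\|_{\calL^2(\Omega)} + \|u\|_{H^1(\Omega)})$, and applying Remark~\ref{Euclidean:re1} converts this into~(\ref{Euclidean:eqn9}) with a constant $C^*(L,\Omega,g)$. The hardest part is the boundary bookkeeping: ensuring that the cutoff, the flattening $\Psi$, and the difference quotients interact correctly so that only tangential difference quotients touch the boundary, which is what keeps the test function in $H^1_0$ and lets the coercive estimate close.
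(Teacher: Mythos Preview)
Your outline is the standard difference-quotient proof of $H^2$ boundary regularity for uniformly elliptic Dirichlet problems, and it is correct as a sketch. However, the paper does not actually prove this theorem: it is quoted without proof from \cite[Ch.~5, Thm.~1.3]{T} and used as a black-box tool. So there is no ``paper's own proof'' to compare against; your argument simply supplies the details that the paper outsources to a reference.
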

\begin{remark}\label{Euclidean:reaa}
If $ u, f $ have  support in $ \Omega' \subset \Omega $, we can set $ C^{*}(L, \Omega', g|_{\Omega'})
= C^*(L, \Omega, g)$ in (\ref{Euclidean:eqn9}), since for $u\in H^2(\Omega',g),$ we have $ \lVert u \rVert_{H^{2}(\Omega', g)} = \lVert u \rVert_{H^{2}(\Omega, g)} $, etc.
\end{remark}

We are now ready to prove the main theorem of this section by an iteration scheme. {There is the technical issue  that 
the functions $ \lbrace u_{k} \rbrace $ in our iteration sequence are not known to be positive;
consequently, 
$ u_{k}^{p-1} = u_k^{\frac{n+2}{n-2}}$ may not be well-defined 
if $ n \equiv 2 \;  (\text{mod} \; 8) $. Therefore, we first show that the following variant of the Yamabe equation,
\begin{equation}\label{Euclidean:rev1}
-a\Delta_{g} u + Su  = \lambda \lvert u \rvert^{p-1} \; {\rm in} \; \Omega; \
u  = c > 0 \; {\rm on} \; \partial \Omega,
\end{equation}
admits a weak solution $ u \in H^{1}(\Omega, g) $.

\begin{remark}\label{REV1RE}
It is intriguing that another variant of the Yamabe equation,
\begin{equation}\label{eq:intr}
    -a\Delta_{g} u + S u = \lambda \lvert u \rvert^{p-2} u \; {\rm in} \; \Omega,
\end{equation}
is related to the nonlinear Schr\"odinger equation with energy-critical exponent. 
Namely, 
up to possible rescaling, 
this Schr\"odinger equation is
\begin{equation}\label{Euclidean:rev2}
i \partial_{t} v + a\Delta_{g} v = -\lambda \lvert v \rvert^{p-2} v \; {\rm in} \; \Omega.
\end{equation}
Substituting  
the soliton solution
$    v(t, \cdot) = e^{-i\omega t} u(\cdot),$
with $ u $
time-independent, 
into (\ref{Euclidean:rev2}) easily gives 
\begin{align*}
-a\Delta_{g} u - \omega u & = \lambda \lvert u \rvert^{p-2} u,
\end{align*}
which is (\ref{eq:intr}) with $S = -\omega.$ 
Therefore, 
our techniques for solving (\ref{Euclidean:rev1}) may 
apply to (\ref{Euclidean:rev2}), especially in a curved space.
\end{remark}

\begin{theorem}\label{Euclidean:thm1} Let $ (\Omega, g) $ be a Riemannian domain 
with ${\rm vol}(\Omega)$ and 
${\rm diam}(\Omega)$ sufficiently small.
Then the Yamabe equation (\ref{Euclidean:rev1}) has a 
  solution $ u \in H^{1}(\Omega, g) $ 
 in the weak sense for any $ \lambda \in (-\kappa, \kappa) $ for some constant $ \kappa = \kappa ({\rm diam}(\Omega), {\rm vol}(\Omega),g, n) $.
\end{theorem}

To be more precise, we start with $(\Omega, g)$ and as necessary pass to sub-Riemannian domains $(\Omega', g|_{\Omega'}) \subset (\Omega,g)$, 
such that ${\rm vol}(\Omega')$ and $ r_{inj}( \Omega') $ 
are sufficiently small.  This ``smallness" is discussed after the proof in Remark \ref{final remark}. Throughout the proof, we discuss the weak form of a linear elliptic PDE, {\it i.e.,} we discuss the form $ B[u, v] = ( f, v )_g, \forall v \in H_{0}^{1}(\Omega) $ where $ B[u, v] = ( -a\Delta_{g}u, v )_g $ 
and $(h, k)_g$ is the $\calL^2(\Omega,g)$ inner product.

\begin{proof} 
 We first consider the linear elliptic PDE with constant boundary condition:
\begin{equation}\label{Euclidean:eqn10}
au_{0} -a\Delta_{g}u_{0} = f \; in \; \Omega; u_{0} = c \; on \; \partial \Omega.
\end{equation}
 By setting $ \tilde{u}_{0} = u_{0} - c $, (\ref{Euclidean:eqn10}) is equivalent to
\begin{equation}\label{Euclidean:eqn10a}
a\tilde{u}_{0} - a\Delta_{g}\tilde{u}_{0} = f - ac \; in \; \Omega; \tilde{u}_{0} \equiv 0 \; on \; \partial \Omega.
\end{equation}
 For any $ f \in \mathcal{L}^{2}(\Omega, g) $,  the Lax-Milgram Theorem implies
 that (\ref{Euclidean:eqn10a}) has a unique solution $ \tilde{u}_{0} \in H_{0}^{1}(\Omega, g) $.
If we choose $ f \in \calC_{c}^{\infty}(\Omega) $, it follows that that $ \tilde{u}_{0} \in  \calC^{\infty}(\Omega) \cap H_{0}^{1}(\Omega, g) $.
 
By the Poincar\'e inequality, we observe that 
\begin{align*}
    & \left(a \tilde{u}_{0} -a\Delta_{g} \tilde{u}_{0}, \tilde{u}_{0} \right)_{g} = \left(f - ac, \tilde{u}_{0}\right)_{g} \Rightarrow \lVert \tilde{u}_{0} \rVert_{H^{1}(\Omega, g)}^{2} \leqslant \frac{1}{a} \lVert f - ac \rVert_{\mathcal{L}^{2}(\Omega, g)} \lVert \tilde{u}_{0} \rVert_{\mathcal{L}^{2}(\Omega, g)} \\
    \Rightarrow &  \lVert \tilde{u}_{0} \rVert_{H^{1}(\Omega, g)}^{2} \leqslant \frac{1}{a} \lVert f - ac \rVert_{\mathcal{L}^{2}(\Omega, g)} \lambda_{1}^{-\frac{1}{2}} \lVert \nabla \tilde{u}_{0} \rVert_{\mathcal{L}^{2}(\Omega, g)} \leqslant \frac{1}{a} \lVert f - ac \rVert_{\mathcal{L}^{2}(\Omega, g)} \lambda_{1}^{-\frac{1}{2}} \lVert  \tilde{u}_{0} \rVert_{H^{1}(\Omega, g)} \\
    \Rightarrow & \lVert \tilde{u}_{0} \rVert_{H^{1}(\Omega, g)} \leqslant \frac{1}{a} \lambda_{1}^{-\frac{1}{2}} \lVert f - ac \rVert_{\mathcal{L}^{2}(\Omega, g)}.
\end{align*}
(The first implication uses $\Vert \tilde u_0\Vert_{H^{1}(\Omega, g)}^{2}
= (\tilde u_0, \tilde u_0)_g + (\nabla \tilde u_0, \nabla \tilde u_0)_g = 
(\tilde u_0, \tilde u_0)_g + (-\Delta_g \tilde u_0, \tilde u_0)_g.$)

Applying Theorem \ref{Euclidean:thmaa} to  (\ref{Euclidean:eqn10a}), 
we have
\begin{equation}\label{Euclidean:eqn13}
\begin{split}
 \lVert \tilde{u}_{0} \rVert_{H^{2}(\Omega, g)} &\leqslant C^{*} \left( \lVert f - ac \rVert_{\mathcal{L}^{2}(\Omega, g)} + \lVert \tilde{u}_{0} \rVert_{H^{1}(\Omega, g)} \right) \leqslant C^{*} \left( 1 + \frac{1}{a} \lambda_{1}^{-\frac{1}{2}} \right) \lVert f - ac \rVert_{\mathcal{L}^{2}(\Omega, g)}  \\
&  : = C\lVert f - ac \rVert_{\mathcal{L}^{2}(\Omega, g)} \\
\Rightarrow & \lVert u_{0} \rVert_{H^{2}(\Omega, g)} \leqslant C \lVert f - ac \rVert_{\mathcal{L}^{2}(\Omega, g)} + \lVert c \rVert_{H_{0}^{2}(\Omega, g)} : = C \lVert f - ac \rVert_{\mathcal{L}^{2}(\Omega, g)} + c\cdot \tilde{C}^{\frac{1}{2}}.
\end{split}
\end{equation}
It follows that $ \tilde{u}_{0} \in H_{0}^{1}(\Omega, g) \cap H^{2}(\Omega, g) $. In particular,  
\begin{equation}\label{insert1d} \tilde{C} :=  \text{vol}(\Omega)^{\frac{1}{2}} 
\end{equation}
decreases as vol$(\Omega)$
 shrinks.
  Furthermore,  $ C = C(-\Delta_g, \Omega, g)$
 is nonincreasing as $ \Omega $ shrinks. Indeed, as $\Omega$ shrinks,  $C = C^*(1+a^{-1}\lambda_1^{-1/2})$  is bounded above by Remarks 2.3(ii) and 2.4.

 For fixed $c$, we can take $ \Omega $ of small enough 
 volume and choose $f$ so that
  $ C \lVert f - ac \rVert_{\mathcal{L}^{2}(\Omega, g)} + c\cdot \tilde{C}^{\frac{1}{2}} \leqslant 1 $, 
so by (\ref{Euclidean:eqn13})
\begin{equation}\label{boundedness}
\lVert u_{0} \rVert_{H^{2}(\Omega, g)} \leqslant 1, \lVert \tilde{u}_{0} \rVert_{H^{2}(\Omega, g)} < 1.
\end{equation}

We apply the iteration scheme  by defining $u_k$ to be the weak solution of 
\begin{equation}\label{Euclidean:eqn14}
au_{k} -a\Delta_{g} u_{k} = au_{k -1} - S u_{k-1} + \lambda \lvert u_{k-1} \rvert^{p-1} \; {\rm in} \; (\Omega, g), \ u_{k} = c \; 
{\rm on} \; \partial \Omega, 
\ k = 1, 2, \dotso.
\end{equation}

The first main step is to prove the boundedness of  $u_k$ in $H_2(\Omega,g)$ 
(see (\ref{Euclidean:eqn18})).   
For 
\begin{equation}\label{tilde u}\tilde{u}_{k} = u_{k} - c,
\end{equation} 
(\ref{Euclidean:eqn14}) is equivalent to
\begin{equation}\label{Euclidean:eqn14a}
a\tilde{u}_{k} - a\Delta_{g} \tilde{u}_{k} = au_{k - 1} - S u_{k-1} + \lambda \lvert u_{k-1} \rvert^{p-1} - ac  \; {\rm in} \; (\Omega, g), \ \tilde{u}_{k} = 0 \; {\rm on} \; \partial \Omega, \ k = 1, 2, \dotso
\end{equation}
in the weak sense. By Lax-Milgram, for $k=1$, (\ref{Euclidean:eqn14a}) has a unique solution 
$ \tilde{u}_{1} \in H^{2}(\Omega, g) \cap H_{0}^{1}(\Omega, g) $ since $ u_{0} \in H^{2}(\Omega, g) $ implies that the right hand side of (\ref{Euclidean:eqn14a}) is in $\calL^2(\Omega)$ (see (\ref{insert})). 
As with $\tilde u_0$, for the same $C$ as in (\ref{Euclidean:eqn13}), we obtain
\begin{equation}\label{Euclidean:eqn15}
\begin{split}
\lVert \tilde{u}_{1} \rVert_{H^{2}(\Omega, g)} & \leqslant C \lVert au_{0} - S u_{0} +  \lambda \lvert u_{0} \rvert^{p-1} - ac \rVert_{\mathcal{L}^{2}(\Omega, g)} \\
& \leqslant aC \lVert u_{0} \rVert_{\calL^{2}(\Omega, g)} + C \sup \lvert S \rvert \lVert u_{0} \rVert_{\mathcal{L}^{2}(\Omega, g)} + C \lvert \lambda \rvert \lVert \lvert u_{0} \rvert^{p-1} \rVert_{\mathcal{L}^{2}(\Omega, g)} + acC\tilde{C}^{\frac{1}{2}}.
\end{split}
\end{equation}

We now apply Proposition \ref{Euclidean:prop1} to bound $ \lVert \lvert u_{0} \rvert^{p-1} \rVert_{\mathcal{L}^{2}(\Omega, g)} $ in (\ref{Euclidean:eqn15}) by $ \lVert u_{0} \rVert_{H^{2}(\Omega, g)} $.
We claim we may assume that $ u_{0} \in \calC^{\infty}(\Omega) \cap H^{2}(\Omega, g) $. Indeed, since $ \calC^{\infty}(\Omega)$ is dense in $H^{2}(\Omega,g) \cap H_{0}^{1}(\Omega, g) $ in $ H^{2} $-norm, we can approximate $ u_{0} $ by a smooth function with arbitrarily  $H^2$-small error.

We start with
\begin{equation*}
\lVert \lvert u_{0} \rvert^{p-1} \rVert_{\mathcal{L}^{2}(\Omega, g)}^{2} = \int_{\Omega} \left\lvert u_{0}^{p-1} \right\rvert^{2} d\text{vol}_{g}  = \left(\int_{\Omega} \left\lvert u_{0} \right\rvert^{2p-2} d\text{vol}_{g} \right)^{\frac{2p-2}{2p-2}} = \lVert u_{0} \rVert_{\mathcal{L}^{2p-2}(\Omega)}^{2p-2}.
\end{equation*}
For $ l = 2p-2 $, $ q = r = 2 $, $ j = 0 $, $ m = 2 $ in (\ref{Euclidean:eqn7}), $\alpha = \frac{n}{n+2}\in [0,1)$, so we can apply (\ref{Euclidean:eqn6}) and (\ref{Euclidean:ext}) to the compactly supported extension $Eu$ of $u$ and obtain
\begin{equation}\label{Euclidean:GN}
\begin{split}
\lVert u_{0} \rVert_{\mathcal{L}^{2p-2}(\Omega, g)} & \leqslant C_{2} \lVert u_{0}
\rVert_{\mathcal{L}^{2p-2}(\Omega)} \leqslant C_{2} \lVert Eu_{0} \rVert_{\calL^{2p-2}(\R^{n})} \leqslant C_{2} C_{0} \lVert \nabla^{2} Eu_{0} \rVert_{\calL^{2}(\R^{n})}^{\frac{n}{n+2}} \lVert Eu_{0} \rVert_{\calL^{2}(\R^{n})}^{\frac{2}{n +2}} \\
& \leqslant C_{2} C_{0} \lVert Eu_{0} \rVert_{H^{2}(\R^{n})} \leqslant C_{2} C_{0} K(2, 2) \lVert u_{0} \rVert_{H^{2}(\Omega)} \leqslant C_{0} K(2, 2) \frac{C_{2}}{C_{1}} \lVert u_{0} \rVert_{H^{2}(\Omega, g)}. 
\end{split}
\end{equation} 
Here we can take $C_0 = C_{2, 0, 2, 2, \frac{n}{n +2}}$ as in (\ref{Euclidean:eqn6}), but for later purposes we set
\begin{equation}\label{insert1c}C_{0} : = \max\left\{ C_{2, 0, 2, 2, \frac{n}{n +2}}, C_{1, 0, 2, 2, \frac{n}{n+2}} \right\}. 
\end{equation}
 Hence
\begin{equation}\label{insert}
 \lVert \lvert u_{0} \rvert^{p-1} \rVert_{\mathcal{L}^{2}(\Omega, g)} = \lVert u_{0} \rVert_{\mathcal{L}^{2p-2}(\Omega, g)}^{p-1} \leqslant C_{0}^{p-1} K(2, 2)^{p-1} \left( \frac{C_{2}}{C_{1}} \right)^{p-1} \lVert u_{0} \rVert_{H^{2}(\Omega, g)}^{p-1}.
\end{equation}
We  cannot directly apply the Poincar\'e inequality to the first two terms on the right hand side of (\ref{Euclidean:eqn15}), 
since $u_0$  does not have zero trace.  This is not a serious problem, since
\begin{align}\label{insert1a}
    aC \lVert u_{0} \rVert_{\calL^{2}(\Omega, g)} & \leqslant aC \lVert \tilde{u}_{0} \rVert_{\calL^{2}(\Omega, g)} + acC \text{vol}_{g}^{\frac{1}{2}} \leqslant aC \lambda_{1}^{-\frac{1}{2}} \lVert \nabla \tilde{u}_{0} \rVert_{\calL^{2}(\Omega, g)} + acC \tilde{C}^{\frac{1}{2}}\nonumber \\
    & = aC \lambda_{1}^{-\frac{1}{2}} \lVert \nabla u_{0} \rVert_{\calL^{2}(\Omega, g)} + acC \tilde{C}^{\frac{1}{2}}.
\end{align}

Plugging 
(\ref{insert}) and (\ref{insert1a}) into (\ref{Euclidean:eqn15}),  and using (\ref{Euclidean:eqn9}), (\ref{boundedness}), 
we get 
\begin{equation}\label{Euclidean:eqn16}
\begin{split}
\lVert u_{1} \rVert_{H^{2}(\Omega, g)} & \leqslant \lVert \tilde{u}_{1} \rVert_{H^{2}(\Omega, g)} + c\tilde{C}^{\frac{1}{2}} \\
& \leqslant aC \lVert u_{0} \rVert_{\calL^{2}(\Omega, g)} + C \sup \lvert S \rvert \lVert u_{0} \rVert_{\mathcal{L}^{2}(\Omega, g)} + C \lvert \lambda \rvert \lVert \lvert u_{0} \rvert^{p-1} \rVert_{\mathcal{L}^{2}(\Omega, g)} + (aC + 1)c\tilde{C}^{\frac{1}{2}} \\
& \leqslant aC \lambda_{1}^{-\frac{1}{2}} \lVert \nabla u_{0} \rVert_{\mathcal{L}^{2}(\Omega, g)} + C \lambda_{1}^{-\frac{1}{2}} \sup \lvert S \rvert \lVert \nabla u_{0} \rVert_{\mathcal{L}^{2}(\Omega, g)} \\
& \qquad + C \lvert \lambda \rvert C_{0}^{p-1} K(2, 2)^{p-1} \left( \frac{C_{2}}{C_{1}} \right)^{p-1} \lVert u_{0} \rVert_{H^{2}(\Omega, g)}^{p-1} + (BC + 1)c\tilde{C}^{\frac{1}{2}} \\
& \leqslant \left(aC \lambda_{1}^{-\frac{1}{2}} +C \sup \lvert S \rvert \lambda_{1}^{-\frac{1}{2}} + \lvert \lambda \rvert C C_{0}^{p-1} K(2, 2)^{p-1} \left( \frac{C_{2}}{C_{1}} \right)^{p-1} \right) \lVert u_{0} \rVert_{H^{2}(\Omega, g)} \\
& \qquad + (BC + 1)c\tilde{C}^{\frac{1}{2}} \\
& \leqslant \left(C \lambda_{1}^{-\frac{1}{2}} \left(a + \sup \lvert S \rvert \right) +  \lvert \lambda \rvert C C_{0}^{p-1} K(2, 2)^{p-1} \left( \frac{C_{2}}{C_{1}} \right)^{p-1} \right) + (BC + 1)c\tilde{C}^{\frac{1}{2}},
\end{split}
\end{equation}
for $B = 2a + \max |S|.$

We can choose $ \Omega $  of small enough diameter and volume so that
\begin{equation}\label{Euclidean:eqn26}
\begin{split}
& C \lambda_{1}^{-\frac{1}{2}} \left(a + \sup \lvert S \rvert \right) + (BC + 1)c\tilde{C}^{\frac{1}{2}} \leq \frac{1}{2},\ \frac{2}{aC} (p-1) \lambda_{1}^{-\frac{2}{n+2}} C_{0} < 1, \
 \lambda_{1}^{-\frac{1}{2}} \frac{1}{aC} < 1.
\end{split}
\end{equation}
(We will use the last two inequalities later.)
Indeed, as $\Omega$ shrinks, we know  $C = C^*(1+a^{-1}\lambda_1^{-1/2})$  is bounded above, 
 $ C_{0} $ in (\ref{insert1c}) is
nonincreasing by Remark \ref{Euclidean:reNG}, $\tilde C\to 0$ in (\ref{insert1d}) as the volume of $\Omega$ shrinks, and  $C_2/C_1$ is bounded by Remark \ref{Euclidean:re1}.

Once $\Omega$ is chosen so that (\ref{Euclidean:eqn26}) holds, the constant
$ K(2, 2) = K(2,2,\Omega)$
in (\ref{Euclidean:ext}) is fixed.  Since 
the choice of the 
constant
scaling $ \lambda $ by a positive constant does not affect the solvability of 
(\ref{Euclidean:eqn14}), 
 we can choose $\lambda$
 such that 
\begin{equation}\label{2nd est} 
\left(C \lambda_{1}^{-\frac{1}{2}} \left(a + \sup \lvert S \rvert \right) +  \lvert \lambda \rvert C C_{0}^{p-1} K(2, 2)^{p-1} \left( \frac{C_{2}}{C_{1}} \right)^{p-1} \right) + (BC + 1)c\tilde{C}^{\frac{1}{2}} \leq 1.
\end{equation}
By the definition of the constants in (\ref{2nd est}), there exists $ \kappa = \kappa ({\rm diam}(\Omega), {\rm vol}(\Omega),g, n)$ such that (\ref{2nd est}) holds for all
 $ \lambda \in (-\kappa, \kappa) $. 
 It follows
from  (\ref{Euclidean:eqn16}), (\ref{Euclidean:eqn26}),   (\ref{2nd est}) that
\begin{equation}\label{Euclidean:eqn17}
\lVert u_{1} \rVert_{H^{2}(\Omega, g)} \leqslant \left(C \lambda_{1}^{-\frac{1}{2}} \left(a + \sup \lvert S \rvert \right) +  \lvert \lambda \rvert C C_{0}^{p-1} K(2, 2)^{p-1} \left( \frac{C_{2}}{C_{1}} \right)^{p-1} \right) + (BC + 1)c\tilde{C}^{\frac{1}{2}} \leqslant 1.
\end{equation}

For any positive integer $ k $, we 
repeat the argument starting with (\ref{Euclidean:eqn14}),
and conclude that $ \tilde{u}_{k} \in H^{2}(\Omega, g) \cap H_{0}^{1}(\Omega, g) $, $ u_{k} \in H^{2}(\Omega, g) $ for every positive integer $ k $, and that
\begin{equation*}\label{quick}
\begin{split}
\lVert u_{k} \rVert_{H^{2}(\Omega, g)} & \leqslant \left(C \lambda_{1}^{-\frac{1}{2}} \left(a + \sup \lvert S \rvert \right) +  \lvert \lambda \rvert C C_{0}^{p-1} K(2, 2)^{p-1} \left( \frac{C_{2}}{C_{1}} \right)^{p-1} \right) \lVert u_{k - 1} \rVert_{H^{2}(\Omega, g)}\\
&\qquad + (BC + 1)c\tilde{C}^{\frac{1}{2}} \\
& \leqslant \left(C \lambda_{1}^{-\frac{1}{2}} \left(a + \sup \lvert S \rvert \right) +  \lvert \lambda \rvert C C_{0}^{p-1} K(2, 2)^{p-1} \left( \frac{C_{2}}{C_{1}} \right)^{p-1} \right) + (BC + 1)c\tilde{C}^{\frac{1}{2}}, 
\end{split}
\end{equation*}
since by induction $ \lVert u_{k-1} \rVert_{H^{2}(\Omega, g)} \leqslant 1 $. Note that the constants and hence the choice of $\lambda$ 
are independent of $k$. 
Therefore,
\begin{equation}\label{Euclidean:eqn18}
\lVert u_{k} \rVert_{H^{2}(\Omega, g)} \leqslant 1, \forall k \in \mathbb{Z}_{\geqslant 0}.
\end{equation}
We thus have a bounded sequence $ \lbrace u_{k} \rbrace $ in $ H^{2}(\Omega, g) $
of weak solutions to (\ref{Euclidean:eqn14}); equivalently, $ \lbrace \tilde{u}_{k} \rbrace $ is 
a bounded sequence 
of weak solutions to (\ref{Euclidean:eqn14a}) in $ H_{0}^{1}(\Omega, g) \cap H^{2}(\Omega, g) $. \\

The second main step is to prove that
$ \lbrace \tilde{u}_{k} \rbrace $ (and not just a subsequence) converges to some $ \tilde{u} \in {H_{0}^{1}(\Omega, g)} $, and hence $ \lbrace u_{k} \rbrace $ converges to $ u $ in $ H^{1}(\Omega, g) $.

Since $ \calC_{c}^{\infty}(\Omega) $ is $H^1$-dense in $ H_{0}^{1}(\Omega, g) $ 
we may    assume as above that $ \lbrace \tilde{u}_{k} \rbrace \subset \calC_{c}^{\infty}(\Omega) $. 
Then $ u_{k} = \tilde{u}_{k} + c \in \calC^{\infty}(\Omega) \cap H^{2}(\Omega, g) \subset \calC^{\infty}(\Omega) \cap H^{1}(\Omega, g) $. To prove the convergence,  take 
(\ref{Euclidean:eqn14}) for $ k $ and $ k + 1 $: 
\begin{equation}\label{Euclidean:eqn19}
\begin{split}
au_{k} -a\Delta_{g} u_{k} & = au_{k-1} - S u_{k-1} + \lambda \lvert u_{k-1} \rvert^{p-1}, \\
au_{k+1} -a\Delta_{g} u_{k+1}  & = au_{k} - S u_{k} + \lambda \lvert u_{k} \rvert^{p-1}. \\
\end{split}
\end{equation}
Subtract the first equation in (\ref{Euclidean:eqn19}) from the second, and pair  both sides with $ \tilde u_{k+1} - \tilde u_{k} $.  Noting that $ \tilde u_{k+1} - \tilde u_{k} =  u_{k+1} -  u_{k} $, we obtain
\begin{align}\label{Euclidean:eqn20}
\MoveEqLeft[6]{a \lVert (\tilde{u}_{k+1} - \tilde{u}_{k}) \rVert_{H^{1}(\Omega, g)}^{2} }\nonumber\\
 &= 
 \left(a(\tilde{u}_{k+1} - \tilde{u}_{k}) + (-a \Delta_{g})(\tilde{u}_{k+1} - \tilde{u}_{k}), \tilde{u}_{k+1} - \tilde{u}_{k}\right)_{g} \\
& = \left(a(\tilde{u}_{k} - \tilde{u}_{k - 1}),  \tilde{u}_{k+1} - \tilde{u}_{k}\right)_g + \left( - S(\tilde{u}_{k} - \tilde{u}_{k-1}), \tilde{u}_{k+1} - \tilde{u}_{k}\right)_{g} \nonumber \\
& \qquad + \left(\lambda\left(\lvert u_{k} \rvert^{p-1} - \lvert u_{k-1} \rvert^{p-1} \right), \tilde{u}_{k+1} - \tilde{u}_{k}\right)_{g},\nonumber
 \end{align}
 where we recall that $(\ ,\ )_{g}$ is the $L^2(\Omega,g)$ inner product.

 For the first two terms on the last line of (\ref{Euclidean:eqn20}), we apply the Poincar\'e inequality (\ref{Euclidean:eqn8}): 
\begin{align}\label{est six}
\MoveEqLeft[6]{
\left( - S(\tilde{u}_{k} - \tilde{u}_{k-1}), \tilde{u}_{k+1} - \tilde{u}_{k}\right)_{g}}\nonumber\\
& \leqslant \sup \lvert S \rvert \lVert \tilde{u}_{k}  - \tilde{u}_{k-1} \rVert_{g}
\lVert \tilde{u}_{k+1}  - \tilde{u}_{k} \rVert_{g}   \\
& 
\leqslant \sup \lvert S \rvert \lambda_{1}^{-1} \lVert \nabla (\tilde{u}_{k}  - \tilde{u}_{k-1}) \rVert_{g}  \lVert \nabla (\tilde{u}_{k+1}  - \tilde{u}_{k}) \rVert_{g} \nonumber \\
& \leqslant \sup \lvert S \rvert \lambda_{1}^{-1} \lVert \tilde{u}_{k}  - \tilde{u}_{k-1} \rVert_{H^{1}(\Omega, g)}  \lVert \tilde{u}_{k+1}  - \tilde{u}_{k} \rVert_{H^{1}(\Omega, g)}, \nonumber
\end{align}
and similarly,
\begin{equation}\label{est sixsix}
\left( a(\tilde{u}_{k} - \tilde{u}_{k-1}), \tilde{u}_{k+1} - \tilde{u}_{k}\right)_{g} \leqslant a \lambda_{1}^{-1} \lVert \tilde{u}_{k}  - \tilde{u}_{k-1} \rVert_{H^{1}(\Omega, g)}  \lVert \tilde{u}_{k+1}  - \tilde{u}_{k} \rVert_{H^{1}(\Omega, g)}.
\end{equation}

To treat the last term on the last line of (\ref{Euclidean:eqn20}), we apply the mean value theorem in the form
$$ | f(y) - f(x) | \leqslant | y - x | \sup_{0 \leqslant t \leqslant 1} | f'( x + t(y - x))| $$
for $ f(z) = z^{p-1} $ and $x, y$ replaced by $ \lvert u_{k-1} \rvert(x), \lvert u_k \rvert(x)$, resp.:
\begin{equation}\label{3rd est}
    \lvert \lvert u_{k} \rvert^{p-1}(x) - \lvert u_{k-1} \rvert^{p-1}(x)  \rvert \leqslant (p-1) \lvert u_{k}(x) - u_{k-1}(x) \rvert \sup_{0 \leqslant t_{k}(x) 
    \leqslant 1} \lvert t_{k}(x) u_{k}(x) + (1 - t_{k}(x)) u_{k-1}(x) \rvert^{p-2}
\end{equation}
\noindent 
where we use 
$ \lvert \lvert a \rvert - \lvert b \rvert \rvert \leqslant \lvert a - b \rvert$
in the first inequality of (\ref{3rd est}). Write $ \Omega  
= \Omega_{1} \sqcup \Omega_{2} \sqcup \Omega_{3} $, where 
\begin{equation*}
    \begin{split}
        \Omega_{1} = \lbrace x \in \Omega : u_{k}(x) > u_{k-1}(x) \rbrace; \\
        \Omega_{2} = \lbrace x \in \Omega : u_{k}(x) < u_{k-1}(x) \rbrace; \\
        \Omega_{3} = \lbrace x \in \Omega : u_{k}(x) = u_{k-1}(x) \rbrace;
    \end{split}
\end{equation*}
\noindent It is clear that  $ t_{k}(x) = 1 $ on $ \Omega_1 $, and  $ t_{k}(x) = 0 $ on $ \Omega_{2} $; on $\Omega_3$, both sides of (\ref{3rd est}) vanish. 
Thus
\begin{align*} \lvert \lvert u_{k} \rvert^{p-1}(x) - \lvert u_{k-1} \rvert^{p-1}(x) \rvert & \leqslant (p-1) \lvert u_{k}(x) - u_{k-1}(x) \rvert \lvert  u_{k}(x) \rvert^{p-2} \ \ \ {\rm on}\ \Omega_1,\\
\lvert \lvert u_{k} \rvert^{p-1}(x) - \lvert u_{k-1} \rvert^{p-1}(x)  \rvert & \leqslant (p-1) \lvert u_{k}(x) - u_{k-1}(x) \rvert \lvert  u_{k-1}(x) \rvert^{p-2} \ \ \ {\rm on}\ \Omega_2.
\end{align*} 
Since $ u_{k} - u_{k-1} = \tilde{u}_{k} - \tilde{u}_{k-1}$, we get
\begin{align*}
\lefteqn{ \left(\lambda \left(\lvert u_{k} \rvert^{p-1} - \lvert u_{k-1} \rvert^{p-1}\right), \tilde{u}_{k+1} - \tilde{u}_{k}\right)_{g} } \ \ \ \ \ \ \ \ \ \ \ \ \ \ \ \ \\
&\leqslant \lvert \lambda \rvert \int_{\Omega} \left\lvert \lvert u_{k} \rvert^{p-1} - \lvert u_{k-1} \rvert^{p-1} \right\rvert \left\lvert \tilde{u}_{k+1} - \tilde{u}_{k} \right\rvert \dvol\\
& = \lvert \lambda \rvert \int_{\Omega_{1}} \left\lvert \lvert u_{k} \rvert^{p-1} - \lvert u_{k-1} \rvert^{p-1}  \right\rvert \left\lvert \tilde{u}_{k+1} - \tilde{u}_{k} \right\rvert \dvol \\
& \qquad + \lvert \lambda \rvert \int_{\Omega_{2}} \left\lvert \lvert u_{k} \rvert^{p-1} - \lvert u_{k-1} \rvert^{p-1} \right\rvert \left\lvert \tilde{u}_{k+1} - \tilde{u}_{k} \right\rvert \dvol \\
&\leqslant \lvert \lambda \rvert \int_{\Omega_{1}} (p-1) \left\lvert u_{k} \right\rvert^{p-2} \left\lvert u_{k} - u_{k-1} \right\rvert \left\lvert \tilde{u}_{k+1} - \tilde{u}_{k} \right\rvert \dvol \\
& \qquad + \lvert \lambda \rvert \int_{\Omega_{2}} (p-1)  \left\lvert u_{k-1} \right\rvert^{p-2} \left\lvert u_{k} - u_{k-1} \right\rvert \left\lvert \tilde{u}_{k+1} - \tilde{u}_{k} \right\rvert \dvol \\
&\leqslant \lvert \lambda \rvert \int_{\Omega} (p-1) \left\lvert u_{k} \right\rvert^{p-2} \left\lvert u_{k} - u_{k-1} \right\rvert \left\lvert \tilde{u}_{k+1} - \tilde{u}_{k} \right\rvert \dvol \\
& \qquad + \lvert \lambda \rvert \int_{\Omega} (p-1)  \left\lvert u_{k-1} \right\rvert^{p-2} \left\lvert u_{k} - u_{k-1} \right\rvert \left\lvert \tilde{u}_{k+1} - \tilde{u}_{k} \right\rvert \dvol \\
& = \lvert \lambda \rvert \int_{\Omega} (p-1) \left\lvert u_{k} \right\rvert^{p-2} \left\lvert \tilde{u}_{k} - \tilde{u}_{k-1} \right\rvert \left\lvert \tilde{u}_{k+1} - \tilde{u}_{k} \right\rvert \dvol \\
& \qquad + \lvert \lambda \rvert \int_{\Omega} (p-1)  \left\lvert u_{k-1} \right\rvert^{p-2} \left\lvert \tilde{u}_{k} - \tilde{u}_{k-1} \right\rvert \left\lvert \tilde{u}_{k+1} - \tilde{u}_{k} \right\rvert \dvol.
\end{align*}

 Applying H\"older's inequality to $ p_{1}, p_{2}, p_{3} $ with  $ p_{1} = \frac{n +2}{2} $, $ p_{2} = p_{3} = \frac{2(n + 2)}{n} $ (so $ \frac{1}{p_{1}} + \frac{1}{p_{2}} + \frac{1}{p_{3}} = 1 $), and recalling that $p-2 = \frac{4}{n-2}$, we obtain
\begin{equation}\label{Euclidean:eqn21}
\begin{split}
& \left(\lambda\left( \lvert u_{k} \rvert^{p-1} - \lvert u_{k-1} \rvert^{p-1} \right), \tilde{u}_{k+1} - \tilde{u}_{k}\right)_{g} \\
& \leqslant (p-1) \lvert \lambda \rvert \left( \int_{\Omega} \left\lvert u_{k} \right\rvert^{\frac{4p_{1}}{n-2}} dVol_{g}\right)^{\frac{1}{p_{1}}} \lVert \tilde{u}_{k} - \tilde{u}_{k-1} \rVert_{\mathcal{L}^{p_{2}}(\Omega, g)} \lVert \tilde{u}_{k+1} - \tilde{u}_{k} \rVert_{\mathcal{L}^{p_{2}}(\Omega, g)} \\
& \qquad + (p-1) \lvert \lambda \rvert \left( \int_{\Omega} \left\lvert u_{k-1} \right\rvert^{\frac{4p_{1}}{n-2}} dVol_{g}\right)^{\frac{1}{p_{1}}} \lVert \tilde{u}_{k} - \tilde{u}_{k-1} \rVert_{\mathcal{L}^{p_{2}}(\Omega, g)} \lVert \tilde{u}_{k+1} - \tilde{u}_{k} \rVert_{\mathcal{L}^{p_{2}}(\Omega, g)} \\
& = (p-1) \lvert \lambda \rvert \left\lVert u_{k} \right\rVert_{\mathcal{L}^{\frac{4p_{1}}{n- 2}}(\Omega, g)}^{\frac{4}{n-2}} \lVert \tilde{u}_{k} - \tilde{u}_{k-1} \rVert_{\mathcal{L}^{p_{2}}(\Omega, g)} \lVert \tilde{u}_{k+1} - \tilde{u}_{k} \rVert_{\mathcal{L}^{p_{2}}(\Omega, g)} \\
& \qquad + (p-1) \lvert \lambda \rvert \left\lVert u_{k-1} \right\rVert_{\mathcal{L}^{\frac{4p_{1}}{n- 2}}(\Omega, g)}^{\frac{4}{n-2}} \lVert \tilde{u}_{k} - \tilde{u}_{k-1} \rVert_{\mathcal{L}^{p_{2}}(\Omega, g)} \lVert \tilde{u}_{k+1} - \tilde{u}_{k} \rVert_{\mathcal{L}^{p_{2}}(\Omega, g)}.
\end{split}
\end{equation}
Note that 
\begin{equation*}
    p_{1} = \frac{n+2}{2} \Rightarrow \frac{4p_{1}}{n - 2} = \frac{2(n + 2)}{n - 2} 
    = 2p - 2.
\end{equation*}
For the terms $ \left\lVert u_{k} \right\rVert_{\mathcal{L}^{2p-2}(\Omega, g)}^{\frac{4}{n-2}} $, 
$ \left\lVert u_{k-1} \right\rVert_{\mathcal{L}^{2p-2}(\Omega, g)}^{\frac{4}{n-2}} $ in the last two lines of 
(\ref{Euclidean:eqn21}), we apply  (\ref{Euclidean:GN}) and (\ref{Euclidean:eqn18}) to get
\begin{equation}\label{Euclidean:eqn22}
\begin{split}
\left\lVert u_{k} \right\rVert_{\mathcal{L}^{2p-2}(\Omega, g)} & \leqslant C_{0} K(2, 2) \frac{C_{2}}{C_{1}} \lVert u_{k} \rVert_{H^{2}(\Omega, g)} \\
\Rightarrow \left\lVert u_{k} \right\rVert_{\mathcal{L}^{2p-2}(\Omega, g)}^{\frac{4}{n - 2}} & \leqslant \left( C_{0} K(2, 2) \frac{C_{2}}{C_{1}} \right)^{\frac{4}{n - 2}} \lVert u_{k} \rVert_{H^{2}(\Omega, g)}^{\frac{4}{n - 2}} \leqslant \left( C_{0} K(2, 2) \frac{C_{2}}{C_{1}} \right)^{\frac{4}{n - 2}},\\
\left\lVert u_{k-1} \right\rVert_{\mathcal{L}^{2p-2}(\Omega, g)}^{\frac{4}{n - 2}}
&\leqslant \left( C_{0} K(2, 2) \frac{C_{2}}{C_{1}} \right)^{\frac{4}{n - 2}}.
\end{split}
\end{equation}

We next consider  terms like $ \lVert \tilde{u}_{k} - \tilde{u}_{k-1} \rVert_{\mathcal{L}^{p_{2}}(\Omega, g)} = \lVert \tilde{u}_{k} - \tilde{u}_{k-1} \rVert_{\mathcal{L}^{\frac{2(n+2)}{n}}(\Omega, g)} $
in the last two lines of (\ref{Euclidean:eqn21}). 
We have 
\begin{equation*}
    \calC_{c}^{\infty}(\Omega) \hookrightarrow H_{0}^{1}(\Omega, g) \hookrightarrow \calL^{\frac{2n}{n - 2}}(\Omega, g) \hookrightarrow \calL^{\frac{2(n + 2)}{n}}(\Omega, g),
\end{equation*}
where the first arrow 
follows from the definition of $H_0^1,$ the second arrow is continuous by a Sobolev embedding theorem \cite[Ch.~4]{Adams}, and the third arrow is continuous 
since $ \frac{2(n + 2)}{n} < \frac{2n}{n - 2}, \forall n \geqslant 3 $. Note that $ \calC_{c}^{\infty}(\Omega) $ is $ \calL^{\frac{2(n + 2)}{n}}(\Omega, g) $-dense in $ \calL^{\frac{2(n + 2)}{n}}(\Omega, g) $.
Applying Proposition \ref{Euclidean:prop1} 
with 
$l=\frac{2(n+2)}{n}, q = r = 2, j = 0, m = 1 $ in (\ref{Euclidean:eqn7}),
we obtain
\begin{equation*}
\frac{n}{2(n+2)} = \alpha \left(\frac{1}{2} - \frac{1}{n} \right) + \frac{1 - \alpha}{2} \Rightarrow \alpha = \frac{n}{n+2}.
\end{equation*}
Thus $\alpha\in (0,1),$ and it follows from (\ref{Euclidean:eqn6}) and (\ref{insert1c}) that
\begin{equation}\label{Euclidean:eqn23}
\begin{split}
\lVert \tilde{u}_{k} - \tilde{u}_{k-1} \rVert_{\mathcal{L}^{p_{2}}(\Omega, g)} &\leqslant C_{1, 0, 2, 2, \frac{n}{n +2}} \lVert \nabla (\tilde{u}_{k} - \tilde{u}_{k-1}) \rVert_{\mathcal{L}^{2}(\Omega, g)}^{\frac{n}{n+2}} \lVert \tilde{u}_{k} - \tilde{u}_{k-1} \rVert_{\mathcal{L}^{2}(\Omega, g)}^{\frac{2}{n+2}} \\
& \leqslant C_{0} \lambda_{1}^{-\frac{1}{n + 2}} \lVert \nabla (\tilde{u}_{k} - \tilde{u}_{k-1}) \rVert_{\mathcal{L}^{2}(\Omega, g)} \leqslant C_{0} \lambda_{1}^{-\frac{1}{n + 2}} \lVert \tilde{u}_{k} - \tilde{u}_{k-1} \rVert_{H^{1}(\Omega, g)}, \\
\lVert \tilde{u}_{k+1} - \tilde{u}_{k} \rVert_{\mathcal{L}^{p_{2}}(\Omega, g)} 
&\leqslant C_{0} \lambda_{1}^{-\frac{1}{n + 2}} \lVert \tilde{u}_{k+1} - \tilde{u}_{k} \rVert_{H^{1}(\Omega, g)}. 
\end{split}
\end{equation}
Plugging (\ref{Euclidean:eqn22})  
and (\ref{Euclidean:eqn23}) 
into (\ref{Euclidean:eqn21}), we conclude that the 
last term of (\ref{Euclidean:eqn20}) satisfies
\begin{equation}\label{est five}
\begin{split}
& \left(\lambda\left(\lvert u_{k} \rvert^{p-1} - \lvert u_{k-1} \rvert^{p-1} \right), \tilde{u}_{k+1} - \tilde{u}_{k}\right)_{\calL^2(\Omega,g)} \\
& \qquad \leqslant 2(p -1) \lvert \lambda \rvert \left( C_{0} K(2, 2) \frac{C_{2}}{C_{1}} \right)^{\frac{4}{n - 2}} \lVert \tilde{u}_{k} - \tilde{u}_{k-1}\rVert_{\mathcal{L}^{p_{2}}(\Omega, g)} \lVert \tilde{u}_{k+1} - \tilde{u}_{k} \rVert_{\mathcal{L}^{p_{2}}(\Omega, g)} \\
& \qquad \leqslant 2(p -1) \lvert \lambda \rvert \left( C_{0} K(2, 2) \frac{C_{2}}{C_{1}} \right)^{\frac{4}{n - 2}} C_{0}^{2} \lambda_{1}^{-\frac{2}{n + 2}} \lVert \tilde{u}_{k} - \tilde{u}_{k-1} \rVert_{H^{1}(\Omega, g)} \lVert \tilde{u}_{k+1} - \tilde{u}_{k} \rVert_{H^{1}(\Omega, g)} \\
& \qquad \leqslant 2(p -1) \lvert \lambda \rvert \left( K(2, 2) \frac{C_{2}}{C_{1}} \right)^{\frac{4}{n - 2}} C_{0}^{p} \lambda_{1}^{-\frac{2}{n+2}} \lVert \tilde{u}_{k} - \tilde{u}_{k-1} \rVert_{H^{1}(\Omega, g)} \lVert \tilde{u}_{k+1} - \tilde{u}_{k} \rVert_{H^{1}(\Omega, g)}.
\end{split}
\end{equation}
It follows from (\ref{Euclidean:eqn20}), (\ref{est six}), (\ref{est sixsix}), and (\ref{est five}) 
that
\begin{equation}\label{Euclidean:eqn25}
\begin{split}
& \lVert \tilde{u}_{k+1} - \tilde{u}_{k} \rVert_{H^{1}(\Omega, g)} \\
& \qquad \leqslant \left( \lambda_{1}^{-1} \left(1 + a^{-1} \sup \lvert S \rvert \right) + 
2a^{-1} (p -1) \lvert \lambda \rvert \left( K(2, 2) \frac{C_{2}}{C_{1}} \right)^{\frac{4}{n - 2}} C_{0}^{p} \lambda_{1}^{\frac{-2}{n+2}}\right) \\
& \qquad \qquad \cdot \lVert \tilde{u}_{k} - \tilde{u}_{k-1} \rVert_{H^{1}(\Omega, g)},
\end{split}
\end{equation}
where we have cancelled $\lVert  \tilde{u}_{k+1} - \tilde{u}_{k} \rVert_{H^{1}(\Omega, g)}$ from both sides of 
(\ref{Euclidean:eqn25}).
By  (\ref{Euclidean:eqn26}), we have 
\begin{align*}
    \lambda_{1}^{-1} \left(1 + \frac{1}{a} \sup \lvert S \rvert \right) &= C \lambda_{1}^{-\frac{1}{2}} ( a + \sup \lvert S \rvert) \cdot \lambda_{1}^{-\frac{1}{2}} \frac{1}{aC} < C \lambda_{1}^{-\frac{1}{2}} ( a + \sup \lvert S \rvert), \\
    2a^{-1} (p -1) \lvert \lambda \rvert \left( K(2, 2) \frac{C_{2}}{C_{1}} \right)^{\frac{4}{n - 2}} C_{0}^{p} \lambda_{1}^{\frac{-2}{n+2}} 
   & = \left( \lvert \lambda \rvert C C_{0}^{p-1}\left(\frac{C_{2}}{C_{1}} \right)^{\frac{4}{n - 2}} K(2, 2)^{p-1} \right)\\
   &\qquad \cdot \left( \frac{2}{aC} (p-1) \lambda_{1}^{-\frac{2}{n+2}} C_{0} \right) \cdot \left( K(2, 2) \frac{C_{2}}{C_{1}} \right)^{-1} \\
   & < \lvert \lambda \rvert C C_{0}^{p-1}\left(\frac{C_{2}}{C_{1}} \right)^{\frac{4}{n - 2}} K(2, 2)^{p-1},
\end{align*}
where we use $ K(2, 2) \geqslant 1 $, $ C_{2} / C_{1} \geqslant 1 $.
Combining these two estimates and applying (\ref{2nd est}), we observe that
\begin{align}\label{est seven}
        \lefteqn{  \lambda_{1}^{-1} \left(1 + \frac{1}{a} \sup \lvert S \rvert \right) + 
2a^{-1} (p -1) \lvert \lambda \rvert \left( K(2, 2) \frac{C_{2}}{C_{1}} \right)^{\frac{4}{n - 2}} C_{0}^{p} \lambda_{1}^{\frac{-2}{n+2}}   }\ \ \ \ \ \ \ \ \ \ \ \ \ \ \ \ \ \ \ \ \ \ \ \  \nonumber\\
& < C \lambda_{1}^{-\frac{1}{2}} \left(a + \sup \lvert S \rvert \right) +  \lvert \lambda \rvert C C_{0}^{p-1} K(2, 2)^{p-1} \left( \frac{C_{2}}{C_{1}} \right)^{p-1} \\
& \leqslant 1 - (BC + 1)c\tilde{C}^{\frac{1}{2}}.\nonumber
\end{align}
By  (\ref{2nd est}), 
\begin{equation}\label{Adef}
    A:= 1- (BC + 1)c\tilde{C}^{\frac{1}{2}} \in (0,1).
\end{equation}
Thus  (\ref{Euclidean:eqn25}) becomes
\begin{equation}\label{big estimate}
\lVert \tilde{u}_{k+1} - \tilde{u}_{k} \rVert_{H^{1}(\Omega, g)} < A 
\lVert \tilde{u}_{k} - \tilde{u}_{k-1} \rVert_{H^{1}(\Omega, g)},
\end{equation}
which implies that
$ \lbrace \tilde{u}_{k} \rbrace $ is a Cauchy sequence in
$ H^{1}_0(\Omega, g) $. By (\ref{tilde u}),  $ u_{k}$ converges to some $u \in H^{1}(\Omega, g) $. Taking the limit on both sides of (\ref{Euclidean:eqn14}), it follows that
\begin{equation*}
-a\Delta_{g} u + Su = \lambda \lvert u \rvert^{p-1} \; {\rm in} \; \Omega, 
\end{equation*}
in the weak sense.  Since $\tilde u = \lim \tilde u_k$ has zero trace, $u=c $ on $\partial\Omega$.  Thus $u$ solves (\ref{Euclidean:rev1}). 
\end{proof}

\begin{remark} \label{final remark} (i) We discuss where in the proof we may have to shrink $\Omega$ and
decrease   the choice of $\lambda$ in 
(\ref{Euclidean:rev1}) and throughout the paper.
\begin{enumerate}
\item To obtain $C_2/C_1\in [1,4]$ in Remark \ref{Euclidean:re1}, we may have to decrease 
diam${}(\Omega).$
\item For (\ref{boundedness}), we may have to decrease vol${}(\Omega).$
\item For (\ref{Euclidean:eqn26}), we may have to decrease both vol${}(\Omega)$ and diam${}(\Omega).$
\end{enumerate} 
In particular, (\ref{Euclidean:eqn26}) and (\ref{2nd est}) depend on $\max |S|$ on $\overline{\Omega}.$

(ii) In the case $\lambda = 0$, if the conformal Laplacian $ -a\Delta_{g} + S_{g} $ has zero as first eigenvalue, then by the Fredholm alternative $-a\Delta_{g}u + S_{g} u=0, u \equiv c>0$ on $\partial \Omega$ cannot have a solution, which would contradict Theorem \ref{Euclidean:thm1}.  However, it is easy to check that for
$\Omega $ small enough, the conformal Laplacian  has positive first eigenvalue.

\end{remark}

We close this section by proving that the weak solution $u\in H^1(\Omega,g)$ of (\ref{Euclidean:rev1}) is actually in  $ H^{2}(\Omega, g) $.
We will use some familiar analytic tools stated below: a weak maximum principle for elliptic operators, various
Sobolev embedding theorems,  
interior elliptic regularity, and Schauder estimates.  
We assume familiarity with the H\"older spaces $ \calC^{0, \alpha}(\Omega) $ and the Schauder spaces $ \calC^{s, \alpha}(\Omega). $

\begin{theorem}\label{Euclidean:thm2}
(i) \cite[Cor.~3.2]{GT} (Weak Maximum Principle) Let $\Omega\subset \R^n$ be a bounded domain with $\calC^2$ boundary. Let $ L $ be a second order elliptic operator of the form
\begin{equation*}
Lu = -\sum_{\lvert \alpha \rvert = 2} -a_{\alpha}(x) \partial^{\alpha} u + \sum_{\lvert \beta \rvert = 1} -b_{\beta}(x) \partial^\beta u + c(x) u
\end{equation*}
\noindent where $ a_{\alpha}, b_{\beta}, c \in \calC^{\infty}(\Omega) $ are smooth and bounded real-valued functions on 
$ \Omega $. Let $ u \in \calC^2(\bar{\Omega}) $. Suppose that in $ \Omega $, we have
$Lu \geqslant 0, c(x) \geqslant 0.$
Then for $u^{-} : = \min(u, 0),$
$$
\inf_{\Omega} u = \inf_{\partial \Omega} u^{-}. 
$$

(ii) \cite[Thm.~3.5]{GT}
(Strong Maximum Principle) Assume that $\partial \Omega$ is smooth.
Let $L$ be a second order uniformly elliptic operator as 
above.  If $Lu \geq 0$,  $ c(x) \geqslant 0 $, and if $u(x)=0$ at an interior point $x\in \Omega$, 
then $u\equiv 0$ on $\Omega.$
\end{theorem}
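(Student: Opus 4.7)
These are standard results cited from Gilbarg--Trudinger; I would follow the classical Hopf template. Part (i) reduces to an interior extremum test after a small perturbation, and part (ii) is deduced from the Hopf boundary point lemma via a connectedness argument applied to the level sets of $u$.

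For part (i), suppose for contradiction that $u$ attains a strictly negative infimum at some interior $x_0 \in \Omega$. Replace $u$ by $u_\epsilon := u + \epsilon e^{\gamma x_1}$ for $\gamma$ large (so $Le^{\gamma x_1}$ is strictly positive by ellipticity) and $\epsilon > 0$ small (so the perturbation is smaller than the gap between the interior infimum and $\inf_{\partial\Omega} u^-$). Then $Lu_\epsilon > 0$ strictly, yet $u_\epsilon$ still attains a negative interior infimum at some $x_\epsilon$. At $x_\epsilon$, the gradient vanishes and the Hessian is positive semidefinite, so in the notation of the statement $-\sum a_\alpha \partial^\alpha u_\epsilon \leq 0$ by ellipticity, the first-order term vanishes, and $c(x_\epsilon) u_\epsilon(x_\epsilon) \leq 0$ because $c \geq 0$ and $u_\epsilon(x_\epsilon) < 0$. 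This gives $Lu_\epsilon(x_\epsilon) \leq 0$, contradicting $Lu_\epsilon > 0$; letting $\epsilon \to 0$ completes the argument.

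For part (ii), the key lemma is Hopf's boundary point estimate. Given an interior ball $B = B_R(y) \subset \Omega$ with $u < u(x_0)$ in $B$ and $u(x_0) \geq 0$ at $x_0 \in \partial B$, introduce the barrier
\begin{equation*}
v(x) = e^{-\alpha |x - y|^2} - e^{-\alpha R^2}
\end{equation*}
on the annulus $A = B_R(y) \setminus \overline{B_{R/2}(y)}$. A direct computation shows $Lv \leq 0$ on $A$ provided $\alpha$ is chosen large enough in terms of the ellipticity constant and $\sup_\Omega(\lvert b \rvert + \lvert c \rvert)$. Form $w := u - u(x_0) + \epsilon v$. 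Since $u - u(x_0)$ is strictly negative and bounded away from zero on the compact set $\partial B_{R/2}(y)$, choosing $\epsilon$ small gives $w \leq 0$ on $\partial B_{R/2}(y)$; and on $\partial B_R(y)$, $v$ vanishes and $u \leq u(x_0)$, so $w \leq 0$ there too. Since $Lw \geq 0$, part (i) forces $w \leq 0$ throughout $A$. Because $w(x_0) = 0$ is the maximum of $w$ on $\overline{A}$, the outward normal derivative satisfies $\partial_\nu w(x_0) \geq 0$, and since $\partial_\nu v(x_0) < 0$ we conclude $\partial_\nu u(x_0) > 0$. To close the strong maximum principle, suppose $u(x_0) = 0$ at an interior $x_0$ while $u \not\equiv 0$. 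After reducing to the standard setup (so that the interior zero is an extremum of $u$), let $B$ be the largest open ball centered at a point $y$ of the nonempty set $\{u \neq 0\}$ contained in $\{u \neq 0\}$ whose closure meets $\{u = 0\}$; applying the Hopf lemma on $B$ yields a strictly positive normal derivative at an interior extremum of $u$, which contradicts $\nabla u = 0$ there.

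The main technical obstacle is the barrier construction and the sign bookkeeping: the hypothesis $u(x_0) \geq 0$ (equivalent to normalizing the extremum to be nonnegative) is exactly what is needed to keep the zeroth-order term $cu$ under control so that $Lw \geq 0$ holds on $A$; without the sign condition on $c$ or on the extremal value, the standard barrier fails.
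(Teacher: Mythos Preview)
The paper does not supply a proof of this theorem; it is quoted verbatim from Gilbarg--Trudinger (Corollary~3.2 and Theorem~3.5) as background input for the later positivity arguments. So there is no ``paper's own proof'' to compare against, and your sketch is essentially a recap of the classical Hopf argument that appears in the cited reference. In that sense your approach is the expected one and is broadly correct.

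Two small comments on the write-up. In part~(i), once you have fixed $\epsilon$ small enough to preserve a negative interior minimum for $u_\epsilon$, the contradiction with $Lu_\epsilon>0$ is already complete; the phrase ``letting $\epsilon\to 0$ completes the argument'' is superfluous and slightly misleading. In part~(ii), the statement as recorded in the paper tacitly assumes that the interior zero is a minimum of $u$ (this is how it is actually used in the proof of Theorem~\ref{negative:thm1}, where $u\geq 0$ has already been established via part~(i)); your phrase ``after reducing to the standard setup (so that the interior zero is an extremum of $u$)'' is doing real work here and should not be left as a throwaway clause. Without the hypothesis $u\geq 0$, an interior zero need not force $u\equiv 0$, so make explicit that in the intended application one first obtains $u\geq 0$ from the weak principle and only then invokes the Hopf lemma at an interior minimum.
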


\begin{theorem}  \label{Euclidean:thm3}\cite[Ch.~4]{Adams}  (Sobolev Embeddings) 
Let $ \Omega \in \R^{n} $ be a bounded, open set with smooth boundary $ \partial \Omega $.

(i) For $ s \in \mathbb{N} $ and $ 1 \leqslant p \leqslant p' < \infty $ such that
\begin{equation}\label{Euclidean:eqn29}
   \frac{1}{p} - \frac{s}{n} \leqslant \frac{1}{p'},
\end{equation}
\noindent  $ W^{s, p}(\Omega) $ continuously embeds into $ \mathcal{L}^{p'}(\Omega) $: 
for some $K =K(s,p,p',\Omega, g)>0$, 
\begin{equation}\label{Euclidean:eqn29a}
\lVert u \rVert_{\calL^{p'}(\Omega, g)} \leqslant K \lVert u \rVert_{W^{s, p}(\Omega, g)}.
\end{equation}

(ii) For $ s \in \mathbb{N} $, $ 1 \leqslant p < \infty $ and $ 0 < \alpha < 1 $ such that
\begin{equation}\label{Euclidean:eqn30}
  \frac{1}{p} - \frac{s}{n} \leqslant -\frac{\alpha}{n},
\end{equation}
Then  $ W^{s, p}(\Omega) $ continuously embeds in the H\"older space $ \calC^{0, \alpha}(\Omega) $:
for some $K' =K'(s,p,p',\Omega, g) >0$, 
\begin{equation}\label{Euclidean:eqn30a}
\lVert u \rVert_{\calC^{0, \alpha}(\Omega)} \leqslant K' \lVert u \rVert_{W^{s, p}(\Omega, g)}.
\end{equation}
\end{theorem}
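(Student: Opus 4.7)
The plan is to reduce both embeddings to the corresponding classical statements on $\mathbb{R}^n$. By Remark \ref{Euclidean:re1}, the Riemannian and Euclidean Sobolev norms are equivalent on the bounded $\Omega$, so the $g$-dependence contributes only bounded constants. By the Extension Operator (Proposition \ref{Euclidean:propext}), every $u \in W^{s,p}(\Omega)$ extends to $Eu \in W^{s,p}(\mathbb{R}^n)$ with $\|Eu\|_{W^{s,p}(\mathbb{R}^n)} \leq K(s,p,\Omega)\|u\|_{W^{s,p}(\Omega)}$. It therefore suffices to establish both inequalities with $\Omega$ replaced by $\mathbb{R}^n$; the embedding constants $K, K'$ in \eqref{Euclidean:eqn29a} and \eqref{Euclidean:eqn30a} will then absorb the extension norm $K(s,p,\Omega)$ and powers of $C_1, C_2$ from Remark \ref{Euclidean:re1}.

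For part (i), I would first prove the baseline Gagliardo--Nirenberg--Sobolev inequality $\|u\|_{L^{n/(n-1)}(\mathbb{R}^n)} \leq \|\nabla u\|_{L^1(\mathbb{R}^n)}$ for $u \in \calC_c^\infty(\mathbb{R}^n)$ via the fundamental theorem of calculus along each coordinate axis combined with iterated H\"older estimates (the Loomis--Whitney argument). Applying this inequality to $|u|^\gamma$ with $\gamma = p(n-1)/(n-p)$ and then using H\"older yields, for $1 \leq p < n$, the Sobolev inequality $\|u\|_{L^{p^\ast}(\mathbb{R}^n)} \leq C \|\nabla u\|_{L^p(\mathbb{R}^n)}$ with $1/p^\ast = 1/p - 1/n$. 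Iterating this step $s$ times gives the sharp embedding $W^{s,p}(\mathbb{R}^n) \hookrightarrow L^{p'}(\mathbb{R}^n)$ whenever $1/p' = 1/p - s/n > 0$. The permissive hypothesis $1/p - s/n \leq 1/p'$ in \eqref{Euclidean:eqn29} is then obtained from the sharp endpoint by H\"older on the bounded $\Omega$; the range $1/p - s/n \leq 0$ (with $p' < \infty$ arbitrary) is reached by iterating just short of the critical step and applying H\"older once more.

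For part (ii), the central tool is Morrey's inequality. Starting from the pointwise representation $|u(x) - \bar u_{B_r(x)}| \leq C \int_{B_r(x)} |\nabla u(z)|\,|x-z|^{-(n-1)}\,dz$ and applying H\"older's inequality with exponent $p > n$, one obtains the key estimate $|u(x) - u(y)| \leq C |x-y|^{1 - n/p} \|\nabla u\|_{L^p(\mathbb{R}^n)}$, and hence $W^{1,p}(\mathbb{R}^n) \hookrightarrow \calC^{0, 1 - n/p}(\mathbb{R}^n)$ for $p > n$. For the general statement, given $(s,p,\alpha)$ satisfying $\frac{1}{p} - \frac{s}{n} \leq -\frac{\alpha}{n}$, I would use part (i) to trade $s-1$ derivatives for integrability, landing in $W^{1, \tilde p}(\mathbb{R}^n)$ with $\tilde p > n$ large enough that $1 - n/\tilde p \geq \alpha$, and then invoke Morrey on the remaining derivative.

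The principal technical obstacle is the borderline case $1/p = s/n$ of part (i): here $W^{s,p}$ does \emph{not} embed into $L^\infty$, yet it does embed into every $L^{p'}$ with $p' < \infty$. The cleanest route is to reduce via iteration to $W^{1,n}(\mathbb{R}^n)$ and then either invoke a weak form of Trudinger's exponential integrability, or bound $\|u\|_{L^{p'}}$ directly by applying the subcritical Sobolev inequality to $|u|^\beta$ for suitable $\beta$ together with a truncation/layer-cake argument. Once this borderline case is resolved, the remaining work is bookkeeping --- tracking how the constants depend on $\Omega$, $g$, $s$, $p$, $p'$, $\alpha$ through the extension norm and the equivalence constants, all of which are finite for the fixed Riemannian domain.
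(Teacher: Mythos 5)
The paper does not prove this theorem; it is quoted directly from Adams's \emph{Sobolev Spaces} (\cite[Ch.~4]{Adams}), so there is no internal proof to compare against. Your sketch is nonetheless a correct and standard route to the result: reduce to $\mathbb{R}^n$ via the norm equivalence of Remark~\ref{Euclidean:re1} and the extension operator of Proposition~\ref{Euclidean:propext}, then prove the Gagliardo--Nirenberg--Sobolev baseline $\|u\|_{L^{n/(n-1)}} \lesssim \|\nabla u\|_{L^1}$ and iterate for part (i), and use Morrey's inequality bootstrapped through part (i) for part (ii). You have also correctly flagged the one genuinely delicate point, the critical case $1/p = s/n$, where $W^{s,p}$ fails to embed into $L^\infty$ but still embeds into every finite $L^{p'}$; either the Trudinger exponential estimate or a truncation and layer-cake argument closes that gap, as you say. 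In short, you have reconstructed the content of the cited reference; there is no error, and the argument is the standard one that the citation points to, so your version is, if anything, more self-contained than the paper's treatment of this statement.
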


\begin{theorem}\label{Euclidean:thm4}\cite[Thm 7.22]{GT} (Kondarachov-Rellich Compactness Theorem)
Let $ \Omega $ be a bounded domain in $ \R^{n} $ with Lipschitz boundary $ \partial \Omega $. Then $ W^{1, p}(\Omega) $  compactly embeds in  $ \mathcal{L}^{q}(\Omega) $ for  $ q < \frac{np}{n - p} $, provided $ p < n $.
\end{theorem}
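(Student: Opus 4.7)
The plan is to establish compactness via the classical Fréchet-Kolmogorov criterion (uniform $\mathcal{L}^p$-equicontinuity under translation) and then upgrade the resulting $\mathcal{L}^p$-convergence to $\mathcal{L}^q$-convergence for the full range $q<p^*:=np/(n-p)$ by interpolating against the bounded Sobolev embedding $W^{1,p}\hookrightarrow \mathcal{L}^{p^*}$. Given a bounded sequence $\{u_k\}\subset W^{1,p}(\Omega)$, I would first apply Proposition \ref{Euclidean:propext} to obtain $v_k:=Eu_k\in W^{1,p}(\R^n)$ supported in a fixed bounded $\Omega''\supset\Omega$, with $\lVert v_k\rVert_{W^{1,p}(\R^n)}\leq K\lVert u_k\rVert_{W^{1,p}(\Omega)}$ still uniformly bounded. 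Approximating each $v_k$ by $\calC_c^\infty$ functions and applying the fundamental theorem of calculus in direction $h$ yields the translation-continuity estimate
\[
\lVert \tau_h v_k - v_k\rVert_{\mathcal{L}^p(\R^n)} \leq |h|\,\lVert \nabla v_k\rVert_{\mathcal{L}^p(\R^n)},
\]
uniform in $k$.

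Next, let $\rho_\epsilon$ be a standard mollifier and set $v_k^\epsilon := \rho_\epsilon * v_k$. Minkowski's integral inequality combined with the translation estimate promotes this to $\lVert v_k^\epsilon - v_k\rVert_{\mathcal{L}^p}\leq \epsilon\,\lVert \nabla v_k\rVert_{\mathcal{L}^p}$, again uniform in $k$. For each fixed $\epsilon>0$, Young's inequality provides uniform $C^1$-bounds on $\{v_k^\epsilon\}$ over $\overline{\Omega''}$, so Arzelà-Ascoli produces a subsequence converging uniformly, hence in $\mathcal{L}^p$. A diagonal argument over a sequence $\epsilon_j\to 0$, exploiting the $k$-uniform closeness of $v_k^{\epsilon_j}$ to $v_k$, then extracts a subsequence $\{u_{k_j}\}$ that is Cauchy in $\mathcal{L}^p(\Omega)$.

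To upgrade to $\mathcal{L}^q$-convergence, note that by Theorem \ref{Euclidean:thm3}(i) the sequence $\{u_k\}$ is also bounded in $\mathcal{L}^{p^*}(\Omega)$. For $p\leq q<p^*$, choose $\theta\in(0,1]$ satisfying $\tfrac{1}{q}=\tfrac{\theta}{p}+\tfrac{1-\theta}{p^*}$; Hölder's inequality then gives
\[
\lVert u_{k_j}-u_{k_l}\rVert_{\mathcal{L}^q}\leq\lVert u_{k_j}-u_{k_l}\rVert_{\mathcal{L}^p}^{\theta}\,\lVert u_{k_j}-u_{k_l}\rVert_{\mathcal{L}^{p^*}}^{1-\theta},
\]
whose right side tends to zero since the first factor does and the second stays bounded. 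For $1\leq q<p$, Hölder on the bounded domain bounds $\lVert \cdot\rVert_{\mathcal{L}^q}$ by a constant times $\lVert \cdot\rVert_{\mathcal{L}^p}$, reducing to the case just handled.

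The main obstacle I expect is the uniform-in-$k$ mollification estimate $\lVert v_k^\epsilon - v_k\rVert_{\mathcal{L}^p}\to 0$: without this uniformity, the $\epsilon$-dependent Arzelà-Ascoli subsequences could not be glued together by diagonalization into a single Cauchy subsequence. The extension step and the interpolation step are, by contrast, essentially bookkeeping once the embedding $W^{1,p}\hookrightarrow\mathcal{L}^{p^*}$ is in hand.
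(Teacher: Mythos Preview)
Your argument is correct and is essentially the standard textbook proof of Rellich--Kondrachov (extension, uniform translation/mollification estimate, Arzel\`a--Ascoli plus diagonalization to get $\mathcal{L}^p$-compactness, then interpolation against the continuous embedding $W^{1,p}\hookrightarrow\mathcal{L}^{p^*}$ to reach all $q<p^*$). There is nothing to compare against in the paper itself: Theorem~\ref{Euclidean:thm4} is merely quoted from \cite[Thm~7.22]{GT} without proof, as a standard analytic input. Your write-up would serve as a self-contained justification if one were desired; the only cosmetic point is that the ``main obstacle'' you flag---uniformity in $k$ of $\lVert v_k^\epsilon-v_k\rVert_{\mathcal{L}^p}\to 0$---is already handled by your own estimate $\lVert v_k^\epsilon-v_k\rVert_{\mathcal{L}^p}\le \epsilon\,\lVert\nabla v_k\rVert_{\mathcal{L}^p}$ together with the uniform $W^{1,p}$-bound, so it is not an obstacle at all.
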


\begin{theorem}\label{Euclidean:thm5}\cite[Thm 2.4]{PL} 
Let $ (\Omega,g) $ be a Riemannian domain, 
and let $ u \in H_{0}^{1}(\Omega, g) $ be a weak solution of $ -\Delta_{g} u = f $.

(i) (Interior Regularity) If $ f \in W^{s, p}(\Omega, g) $ and $ \partial \Omega$ is $\calC^{\infty} $, then $ u \in W^{s + 2, p}(\Omega, g) $ . Also, if $ u \in \mathcal{L}^{p}(\Omega, g) $, then
\begin{equation}\label{Euclidean:reg1}
  \lVert u \rVert_{W^{s + 2, p}(\Omega,g)} \leqslant D_1(\lVert -a\Delta_{g} u \rVert_{W^{s, p}(\Omega,g)} + \lVert u \rVert_{\mathcal{L}^{p}(\Omega,g)}), 
\end{equation}
for some $D_1 = D_1(s,p,-\Delta_g, \Omega, \partial \Omega)>0$.

 (ii) (Schauder Estimates) If $ f \in \calC^{s, \alpha}(\Omega) $  and $ \partial \Omega \in \calC^{s, \alpha} $, then $ u \in \calC^{s + 2, \alpha}(\Omega) $. Also, if $ u \in \calC^{0, \alpha}(\Omega) $, then
$$
  \lVert u \rVert_{\calC^{s + 2, \alpha}(\Omega)} \leqslant D_2(\lVert -a\Delta_{g} u \rVert_{\calC^{s, \alpha}(\Omega)} + \lVert u \rVert_{\calC^{0, \alpha}(\Omega)}), 
$$
for some $D_2 = D_2(s,p,-\Delta_g, \Omega, \partial \Omega)>0$.

\end{theorem}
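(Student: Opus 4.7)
Since $-\Delta_g$ is uniformly elliptic with smooth coefficients in local coordinates (its principal part is $-\sum g^{ij}(x)\partial_i\partial_j$), the plan is to apply classical interior and boundary elliptic regularity for second-order operators with smooth coefficients, adapted to the Riemannian setting by working chart by chart and patching with a partition of unity. Both parts reduce to the Euclidean statement for operators of the form $L = -\sum a^{ij}(x)\partial_i\partial_j + \text{lower order}$ on bounded smooth domains.

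\emph{Part (i).} For $s=0$ and $p=2$, I would use Nirenberg's difference quotient method: test the weak equation against $\partial^{-h}(\zeta^2 \partial^h u)$ for a cutoff $\zeta$ supported in a coordinate patch, integrate by parts, absorb first-derivative terms of $u$ via ellipticity, and pass $h\to 0$ to obtain $u \in H^2_{\mathrm{loc}}$. Global $H^2$ regularity up to $\partial \Omega$ follows by straightening the boundary and controlling tangential difference quotients, recovering the normal second derivative directly from the equation. For general $p$, I would invoke Calder\'on--Zygmund theory: in each coordinate chart, write $u$ as a convolution of $f$ with the fundamental solution of the frozen-coefficient operator $-\sum g^{ij}(x_0)\partial_i\partial_j$ plus a smoother remainder, and use $L^p$-boundedness of the second-order singular integral kernel. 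The case $s \geq 1$ is standard bootstrap: $\partial_\ell u$ weakly solves $-\Delta_g(\partial_\ell u) = \partial_\ell f + [\partial_\ell, -\Delta_g]u$, with the commutator lying in the space given by the inductive hypothesis, so applying the case $s-1$ to this derived equation produces the case $s$.

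\emph{Part (ii).} Here I would use the standard Schauder technique of freezing the principal part at a point $x_0$ and comparing $u$ on small balls with the solution of the constant-coefficient problem. The Morrey--Campanato characterization of $C^{0,\alpha}$ via integral oscillations then yields decay estimates for $\nabla^2 u$, giving the interior $C^{2,\alpha}$ bound, after which bootstrapping in $s$ proceeds exactly as in (i) by differentiating the equation. Boundary Schauder estimates come from flattening $\partial\Omega$ using its $C^{s,\alpha}$ regularity and invoking the half-space Schauder theory.

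\emph{Main obstacle.} The delicate point is not the existence of the estimates but the dependence of $D_1, D_2$ on $(\Omega, \partial \Omega, g)$, because the statement is applied later to small subdomains where uniformity matters. The interior constants depend only on the ellipticity modulus of $g^{ij}$ and on $\|g\|_{C^{s+1}(\bar\Omega)}$, both uniformly controlled when passing to smaller Riemannian subdomains of a fixed smooth $(\Omega,g)$; the boundary constants additionally require uniform $C^{s,\alpha}$ bounds on the boundary chart maps, which is automatic from the assumed smoothness of $\partial \Omega$. Thus $D_1, D_2$ remain uniform under the shrinking used in Theorem \ref{Euclidean:thm1}, which is the feature actually needed in the sequel.
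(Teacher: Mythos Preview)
The paper does not prove this theorem: it is quoted as \cite[Thm~2.4]{PL} and used as a black box, so there is no ``paper's own proof'' to compare against. Your outline is the standard route (difference quotients for $H^2$, Calder\'on--Zygmund for $W^{2,p}$, bootstrap for higher $s$, coefficient freezing plus Campanato for Schauder, boundary flattening for the global versions), and it is correct as a sketch. Your closing remark about the dependence of $D_1, D_2$ on the data is well taken and is indeed how the result is used downstream, though the paper itself does not isolate or justify this uniformity explicitly---it simply invokes the estimate at each step.
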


We 
can now prove that $u$ in (\ref{Euclidean:rev1}) is actually in $ H^{2}(\Omega, g) $.

\begin{lemma}\label{Euclidean:lemma2} Let $ (\Omega, g) $ be a Riemmanian domain.  The solution $ u $ of (\ref{Euclidean:rev1}) obtained in Theorem \ref{Euclidean:thm1} lies in $H^{2}(\Omega, g) $.
\end{lemma}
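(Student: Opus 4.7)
The plan is to exploit the uniform $H^2$-boundedness of the iteration sequence that was established in the proof of Theorem \ref{Euclidean:thm1}. Specifically, (\ref{Euclidean:eqn18}) shows $\lVert u_k \rVert_{H^2(\Omega,g)} \leqslant 1$ for every $k$, while the final step of that proof produced $u \in H^1(\Omega,g)$ as the strong $H^1$-limit of $u_k$ (equivalently, $\tilde u_k \to \tilde u$ in $H^1_0(\Omega,g)$). We will upgrade the regularity of $u$ by combining these two facts via weak compactness.

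First, because $H^2(\Omega,g)$ is a Hilbert space, the uniform bound (\ref{Euclidean:eqn18}) implies, by the Banach--Alaoglu theorem, that a subsequence $\{u_{k_j}\}$ converges weakly in $H^2(\Omega,g)$ to some $v \in H^2(\Omega,g)$. Next, the continuous inclusion $H^2(\Omega,g) \hookrightarrow H^1(\Omega,g)$ forces $u_{k_j} \rightharpoonup v$ weakly in $H^1(\Omega,g)$ as well. On the other hand, the Cauchy estimate (\ref{big estimate}) already shows that the whole sequence $u_k$ converges strongly, hence also weakly, to $u$ in $H^1(\Omega,g)$. By uniqueness of weak limits, $v = u$ as elements of $H^1(\Omega,g)$, and therefore $u = v \in H^2(\Omega,g)$, with $\lVert u \rVert_{H^2(\Omega,g)} \leqslant \liminf_j \lVert u_{k_j} \rVert_{H^2(\Omega,g)} \leqslant 1$ by lower semicontinuity of the norm.

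There is essentially no obstacle here: the argument uses only the reflexivity of $H^2(\Omega,g)$, the embedding $H^2 \hookrightarrow H^1$, and the uniqueness of weak limits, together with the two estimates (\ref{Euclidean:eqn18}) and (\ref{big estimate}) that were already established. An alternative bootstrap proof would start by noting $u \in H^1(\Omega,g) \hookrightarrow \mathcal{L}^{p}(\Omega,g)$ by Theorem \ref{Euclidean:thm3}, deduce $u^{p-1} \in \mathcal{L}^{p/(p-1)}(\Omega,g)$, rewrite the equation as $-a\Delta_g u = \lambda u^{p-1} - Su \in \mathcal{L}^{p/(p-1)}(\Omega,g)$, and apply the interior regularity estimate (\ref{Euclidean:reg1}); but this yields $W^{2,p/(p-1)}$-regularity rather than the desired $H^2$-regularity directly, and would require additional embedding steps. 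The weak-compactness argument is both shorter and gives the sharp $H^2$-bound needed in the subsequent smoothness proof of Theorem \ref{Euclidean:thm6}.
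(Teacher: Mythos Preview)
Your proof is correct and follows essentially the same approach as the paper: extract a weakly convergent subsequence in $H^{2}$ from the uniform bound (\ref{Euclidean:eqn18}), then identify the weak limit with $u$ via the already-established $H^{1}$ convergence. Your presentation is in fact slightly cleaner than the paper's, which passes through the compact embedding $H^{2}\hookrightarrow H^{1}$ and a dual-space argument to match the limits, whereas you observe directly that weak $H^{2}$ convergence implies weak $H^{1}$ convergence (via the continuous inclusion) and invoke uniqueness of weak limits.
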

\begin{proof}
By the equivalence of norms in Remark \ref{Euclidean:re1}, it suffices to show $ u \in H^2(\Omega) $. For $ u = \tilde{u} + c $ as above, 
we only need to show $ \tilde{u} \in H^{2}(\Omega) \cap H_{0}^{1}(\Omega) $, where $ \tilde{u} = \lim \tilde u_k$ in $H_0^1(\Omega).$

By (\ref{Euclidean:eqn5}), (\ref{Euclidean:eqn18}),  $ \lVert \tilde{u}_{k} \rVert_{H^{2}(\Omega)} \leqslant C_1^{-1} $ for all $ k $, 
so there exists a subsequence, also denoted  $ \lbrace \tilde{u}_{k} \rbrace $, 
such that $ \tilde{u}_k \rightharpoonup w $ weakly in $ H^{2}(\Omega) $, {\it i.e.},
\begin{equation}\label{Euclidean:eqn34}
    f(\tilde{u}_{k}) \rightarrow f(w), \forall f \in H^{-2}(\Omega).
\end{equation}
Since $ \imath : H^{2}(\Omega) \hookrightarrow H^{1}(\Omega) $ is a compact inclusion, there exists a subsequence, again denoted $\{\tilde u_k\}$, such that $ \imath(\tilde{u}_k) \to w' $ strongly and hence also weakly in $ H_{0}^{1}(\Omega) $. 
Thus for all $ g \in H^{-1}(\Omega) $,
 $g(\imath(\tilde{u}_{k})) \rightarrow g(w').$
The pullback  $ \imath^{*} : H^{-1}(\Omega) \rightarrow H^{-2}(\Omega) $ is continuous, so $ g \circ \imath = \imath^{*} g \in H^{-2}(\Omega) $ for $ g \in H^{-1}(\Omega) $. It follows from (\ref{Euclidean:eqn34}) that
\begin{equation*}
g(\imath(w)) = g(w'), \forall g \in H^{-1}(\Omega).
\end{equation*}
\noindent Hence $ \imath(w) = w' $ 
in $ H_{0}^{1}(\Omega) $. By the proof of Theorem \ref{Euclidean:thm1}, the (original) sequence $ \lbrace \tilde{u}_{k} \rbrace $ converges to $ \tilde{u} $ strongly in $ H_{0}^{1}(\Omega) $, so it follows that $\tilde{u} = w \in H^{2}(\Omega).$
\end{proof}

Note that we do not claim that $\tilde u_k\to \tilde u$ in $H^2(\Omega).$

\begin{remark}
Since we now know that $ u \in H^{2}(\Omega, g) $, it follows that $u$ solves the Yamabe equation $ -a\Delta_{g} u = -Su + \lambda u^{p-1} $  in the $ \mathcal{L}^{2}(\Omega) $-sense with $ u \equiv c $ on $ \partial \Omega $ in the trace sense.
\end{remark}

In \S3, we discuss the smoothness and positivity of $u$.

\section{Positivity of the solution of the  Yamabe Problem} 

 In this section, we prove that the solution to the Yamabe equation in  dimensions $n\geq 3$ is positive (Theorem \ref{negative:thm1}) and smooth (Theorem \ref{Euclidean:thm6}).

In Theorem \ref{negative:thm1}, we treat two cases, depending on $\rm sgn(\lambda).$  In the proof, it is convenient to assume $$S_g \in \left(-\frac{a}{2},\frac{a}{2}\right),$$
which can always be achieved by scaling $g$.  We prove that the solution $ u $ is positive by showing that each $ u_{k} \geqslant 0 $ for all $k$ in the iteration steps (\ref{Euclidean:eqn14}). It follows that $ u_{k}^{p-1} \geqslant0 $ is well-defined. Therefore,
\begin{enumerate}[(i)]
\item (\ref{Euclidean:eqn14}) becomes 
\begin{equation*}\label{negative:rev1}
    au_{k} -a\Delta_{g} u_{k} = au_{k -1} - S u_{k-1} + \lambda u_{k-1}^{p-1} \; {\rm in} \; (\Omega, g), \ u_{k} = c \; 
{\rm on} \; \partial \Omega, 
\ k = 1, 2, \dotso.
\end{equation*}
\item 
Each $ u_{k} \in \calC^{\infty}(\Omega) \cap H_{0}^{1}(\Omega, g) $ by elliptic regularity.
\item  
(\ref{Euclidean:rev1}) 
can be replaced by the Yamabe equation
\begin{equation}\label{negative:rev2}
    -a\Delta_{g} u + Su  = \lambda     u^{p-1} 
    \; {\rm in} \; \Omega; \
u  = c > 0 \; {\rm on} \; \partial \Omega,
\end{equation}
where $ u \in H^{2}(\Omega, g) $ by Lemma \ref{Euclidean:lemma2}.  
\end{enumerate}

\begin{theorem}\label{negative:thm1} Let $ (\Omega, g) $ be Riemannian domain
 with ${\rm vol}(\Omega)$ and diam$(\Omega)$ sufficiently small.
Then (\ref{negative:rev2}) has a 
 solution $ u \in H^{2}(\Omega, g) $ that is nonnegative a.e. for any $ \lambda \in (-\kappa, \kappa) $ for some constant $ \kappa = \kappa ({\rm diam}(\Omega), {\rm vol}(\Omega),g, n)$.
\end{theorem}
\begin{proof} We analyze the positivity in 
two cases: (i) $\lambda \geq 0$; (ii) $\lambda <0.$\\

\noindent {\bf Case I.}
$ \lambda \geqslant 0 $. From the first iteration step (\ref{Euclidean:eqn10}), 
if we choose $ f_{0} > 0 $ and $ f_{0} \in \calC^{\infty}(\bar{\Omega}), $ then $ u_{0} \in \calC^{\infty}(\Omega) \cap \calC^{0}(\bar{\Omega)} $ and
$au_{0} - a\Delta_{g} u_{0} > 0.$
By the weak maximum principle Theorem \ref{Euclidean:thm2}(i),   $ u_{0} \geqslant 0 $ since $ \inf_{\partial \Omega} \min(u_0, 0) = 0 $. By the strong maximum principle Theorem \ref{Euclidean:thm2}(ii), if $ u_0 =0 $ at some point in $ \Omega $ then $ u_0 \equiv 0 $. This contradicts $ u_0 =c > 0 $ on $ \partial \Omega $, so
 $ u_{0} > 0 $. Inductively,  assume $ u_{k - 1} \in \calC^{\infty} \cap \calC^{0}(\bar{\Omega}) $ and $ u_{k - 1} > 0 $ on $ \Omega $. Then we can remove the absolute value signs in (\ref{Euclidean:eqn14}):
 \begin{equation}\label{negative:eqn1}
au_{k} - a\Delta_{g} u_{k} = au_{k - 1} -S_{g} u_{k-1} + \lambda u_{k-1}^{p-1} > 0,
\end{equation}
since $ a - S_{g} \geqslant \frac{a}{2} > 0 $ and $ \lambda > 0 $.
As above, we conclude that $ u_{k} >0 $ on $ \Omega $. By standard elliptic regularity  (Theorem \ref{Euclidean:thm5}), 
$ u_{k} \in \calC^{\infty}(\Omega) \cap \calC^{0}(\bar{\Omega}) $, where the boundary regularity uses Proposition \ref{Euclidean:propext} and Theorem \ref{Euclidean:thm3}.
Since each $ u_{k} > 0 $, it follows that  $u \geqslant 0 $  a.e.~by  the Riesz subsequence theorem.
\medskip

\noindent {\bf Case II.} $ \lambda < 0 $. Set 
\begin{equation}\label{insert7}  L = \frac{3a}{2}. 
\end{equation}
 As in (\ref{Euclidean:eqn10}), (\ref{Euclidean:eqn10a}),
 we consider the initial step
\begin{equation}\label{negative:eqn2}
\begin{split}
au_{0} - a\Delta_{g} u_{0} & = f_{0} \; {\rm in} \; \Omega, u_{0} \equiv c \; {\rm on} \; \partial \Omega; \\
a\tilde{u}_{0} - a\Delta_{g} \tilde{u}_{0} & = f_{0} - ac \; {\rm in} \; \Omega, \tilde{u}_{0} \equiv 0 \; {\rm on} \; \partial \Omega.
\end{split}
\end{equation}
Assuming $ f_{0} \in \calC^{\infty}(\Omega)\cap (\cap_{p \geq 1}\calL^p(\Omega,g))$, 
elliptic regularity implies $ u_{0} \in \calC^{\infty}(\Omega) $. 
By Theorem \ref{Euclidean:thm3}(i),
\begin{equation*}
u_{0}, \tilde{u}_{0} \in H^{2}(\Omega, g) \Rightarrow u_{0}, \tilde{u}_{0} \in \calL^{r_{1}}(\Omega, g)
\ {\rm for}\ \frac{1}{r_{1}} \geqslant \frac{n - 4}{2n}.
\end{equation*}
\noindent By the interior regularity in Theorem \ref{Euclidean:thm5}(i), $ \tilde{u}_{0} \in W^{2, r_{1}}(\Omega, g) $.
Applying Theorem \ref{Euclidean:thm5}(i) again, we conclude that
\begin{equation*}
\tilde{u}_{0}, u_{0} \in W^{2, r_{1}}(\Omega, g) \Rightarrow \tilde{u}_{0}, u_{0} \in \calL^{r_{2}}(\Omega, g)
\ {\rm for}\ \frac{1}{r_{2}} \geqslant \frac{1}{r_{1}} - \frac{2}{n} \geqslant \frac{n - 8}{2n}.
\end{equation*}
Continuing, we have the following 
bootstrapping for $ \tilde{u}_{0} $ and $ u_{0} $:
\begin{align*}
   u_0, \tilde u_0\in W^{2, 2} &\Rightarrow u_0, \tilde u_0\in\mathcal{L}^{r_{1}}
\Rightarrow u_0, \tilde u_0\in W^{2, r_{1}}\Rightarrow u_0, \tilde u_0\in \mathcal{L}^{r_{2}}
\Rightarrow \ldots\\
&\Rightarrow u_0, \tilde u_0\in W^{2, r_{j}} \Rightarrow 
u_0, \tilde u_0\in \mathcal{L}^{r_{j + 1}}\Rightarrow \ldots,
 \end{align*}
where the $r_j$ are increasing, and each $ r_{j} $ satisfies
\begin{equation}\label{insert5}
\frac{1}{r_{j}} \geqslant \frac{n - 4j}{2n}.
\end{equation}
The right hand side of (\ref{insert5}) is nonpositive for 
\begin{equation}\label{M_n} j\geqslant \left[\frac{n+3}{4}\right] := M_n,
\end{equation}
so once $j\geq M_n$, $ \tilde{u}_{0}, u_{0} \in \mathcal{L}^{r}(M, g) $ and 
$ \tilde{u}_{0}, u_{0} \in W^{2, r}(\Omega, g) $ for all $ r \geqslant 1 $.   Note that $(n-4j)/2n >0$ for $j = 1, \ldots, M_n-1.$
Set
\begin{equation}\label{insert6}
C' =  \max\{aD_{1} + 1, aD_1+D_1\},
   B_{1}' = ac\tilde C^{1/r_1},
   B_{1} = B_{1}'\sum_{l = 0}^{M_{n}} (C'K)^{l} + c\ \vol(\Omega)^{\frac{1}{r_1}}.
\end{equation}
For $ 1 \leqslant j \leqslant M_{n} $, applying (\ref{Euclidean:reg1}) to $ \tilde{u}_{0} $, we 
have
\begin{align}\label{negative:eqn3}
\lVert \tilde{u}_{0} \rVert_{W^{2, r_{j}}(\Omega, g)} & \leqslant D_{1} \left(\lVert f_{0} - ac - a \tilde{u}_{0} \rVert_{\calL^{r_{j}}(\Omega, g)} + \lVert \tilde{u}_{0} \rVert_{\calL^{r_{j}}(\Omega, g)} \right) \nonumber\\
& \leqslant D_{1} \lVert f_{0} \rVert_{\calL^{r_{j}}(\Omega, g)} + (aD_{1} + 1) \lVert \tilde{u}_{0} \rVert_{\calL^{r_{j}}(\Omega, g)} + B_{1}' \\
&\leqslant  C' \left( \lVert f_{0} \rVert_{\calL^{r_{j}}(\Omega, g)} + \lVert \tilde{u}_{0} \rVert_{\calL^{r_{j}}(\Omega, g)} \right) + B_{1}' \nonumber 
\end{align}
For $ 1 \leqslant j \leqslant M_{n} - 1 $, the Sobolev embedding theorem again gives
\begin{align}\label{insert8}
    \lVert \tilde{u}_{0} \rVert_{\calL^{r_{j + 1}}(\Omega, g)} & \leqslant K \lVert \tilde{u}_{0} \rVert_{W^{2, r_{j}}(\Omega, g)}, \lVert \tilde{u}_{0} \rVert_{\calL^{r_{1}}(\Omega, g)} \leqslant K \lVert \tilde{u}_{0} \rVert_{H^{2}(\Omega, g)} < K; \\
    \lVert u_{0} \rVert_{\calL^{r_{j + 1}}(\Omega, g)} & \leqslant \lVert \tilde{u}_{0} \rVert_{\calL^{r_{j + 1}}(\Omega, g)} + c\ \text{vol}(\Omega)^{\frac{1}{r_{j + 1}}}, \lVert u_{0} \rVert_{\calL^{r_{1}}(\Omega, g)} \leqslant \lVert \tilde{u}_{0} \rVert_{\calL^{r_{1}}(\Omega, g)} + c\ \text{vol}(\Omega)^{\frac{1}{r_{1}}}.\nonumber
\end{align}
We can choose $f_{0} > 0 $ small enough so that
\begin{align}\label{negative:eqn4}
    \lVert \tilde{u}_{0} \rVert_{W^{2,2}(\Omega,g)} & \leqslant 1, \lVert \tilde{u}_{0} \rVert_{L^{r_{1}}(\Omega,g)} \leqslant K, \lVert u_{0} \rVert_{L^{r_{1}}(\Omega,g)} \leqslant K + B_{1}; \nonumber\\
\lVert \tilde{u}_{0} \rVert_{W^{2, r_{j}}(\Omega,g)} & \leqslant (C'K)^{j} \left(L + 1\right)^{j} + L \cdot B_{1} \sum_{l = 0}^{j - 1} K^{l}(C')^{l+1} \left(L + 1\right)^{l}\\
&\qquad + B_{1}'\sum_{l=0}^{j-1} K^{l}(C')^{l+1}, j = 1, \dotso, M_{n}; \nonumber\\
\lVert u_{0} \rVert_{\mathcal{L}^{r_{j}}(\Omega,g)} & \leqslant K^{j} (C')^{j-1} (L + 1)^{j-1} + L \cdot B_{1} \sum_{l = 1}^{j - 1} \left(C'K\right)^{l} \left( L + 1 \right)^{l -1} + B_{1}, j = 2, \dotso, M_{n},\nonumber
\end{align}
since this involves only a finite number of choices for $f_0.$  The justification for the complicated terms in (\ref{negative:eqn4}) is given by the Claim below.
Furthermore, $ f_{0} > 0 $ implies $ u_{0} > 0 $, as in Case I. 
\medskip

Consider the first iteration, again with the 
absolute value signs removed: 
\begin{equation}\label{negative:eqn5}
\begin{split}
 au_{1} - a\Delta_{g} u_{1} & = au_{0} - S_{g} u_{0} + \lambda u_{0}^{p-1} \; {\rm in} \; \Omega, u_{1} \equiv c \; {\rm on} \; \partial \Omega; \\
 a\tilde{u}_{1} - a\Delta_{g} \tilde{u}_{1} & = au_{0} - S_{g} u_{0} + \lambda u_{0}^{p-1} - ac \; {\rm in} \; \Omega, \tilde{u}_{1} \equiv 0 \; {\rm on} \; \partial \Omega.
\end{split}
\end{equation}
 $ u_{0} \in \calC^{\infty}(\Omega) $ implies $ u_{1} \in \calC^{\infty}(\Omega) $ by elliptic regularity. 
Since $ \lambda < 0 $ and $ u_{0} > 0 $, if
\begin{equation}\label{negative:eqn6}
au_{0} - S_{g} u_{0} + \lambda u_{0}^{p-1} \geqslant 0,
\end{equation}
then  $ u_{1} \geqslant 0 $. 
  (\ref{negative:eqn6}) holds if we choose $ \lambda $ such that
\begin{equation}\label{negative:eqn7}
\lvert \lambda \rvert \leqslant \frac{\frac{a}{2}}{\sup u_{0}^{p-2}} \leqslant 
\frac{a -S_{g}}{\sup u_{0}^{p-2}}.
\end{equation}

We eventually want to bound $|\lambda|$ independent of the $u_k.$  
To begin, by (\ref{negative:eqn5}) and Sobolev embedding in Theorem \ref{Euclidean:thm3}(ii) we conclude 
that
\begin{align}\label{negative:eqn8}
\lvert u_{0} \rvert & \leqslant \lvert \tilde{u}_{0} + c \rvert \leqslant \lVert \tilde{u}_{0} \rVert_{\calC^{0, \alpha}(\Omega)} + c \leqslant D_{2} \lVert \tilde{u}_{0} \rVert_{W^{2, r_{M_{n}}}(\Omega, g)} + c \\
& \leqslant K' 
\cdot \left( (C'K)^{M_n} \left(L + 1\right)^{M_n} + L \cdot B_{1} \sum_{l = 0}^{M_n - 1} K^{l}(C')^{l+1} \left(L + 1\right)^{l} + B_{1}'\sum_{l = 0}^{M_n - 1} K^{l}(C')^{l+1} \right) + c\nonumber \\
& : = C_{M_{n}}.\nonumber
\end{align}
We note that to apply Theorem \ref{Euclidean:thm3}(ii), we need $1/r_{M_n} - 2/n \leq -\alpha/n$, which holds if
$r_{M_n} \geqslant n.$  This can be arranged, since by (\ref{insert5}) and (\ref{M_n}), $r_{M_n}$ can be arbitrarily large. 
Hence by (\ref{negative:eqn6}) - (\ref{negative:eqn8}), $u_1\geqslant 0$ if
\begin{equation}\label{negative:eqn8a}
\lvert \lambda \rvert \leqslant \frac{\frac{a}{2}}{C_{M_{n}}^{p-2}}.
\end{equation}    
This holds for 
any 
$ \lambda \in (-\kappa, \kappa) $ 
by possibly shrinking $ \kappa  $ 
in Theorem \ref{Euclidean:thm1}. 
In fact, $ u_{1} > 0 $ by the  maximum principle.
By (\ref{Euclidean:eqn17}), we still have $ \lVert u_{1} \rVert_{H^{2}(\Omega, g)} \leqslant 1 $,
after possibly shrinking 
$|\lambda|$ in (\ref{negative:eqn8a}).
Since (\ref{negative:eqn6}) now holds, we have 
\begin{equation}\label{negative:eqn9}
\begin{split}
a\tilde{u}_{1} - a\Delta_{g} \tilde{u}_{1} & = au_{0} - S_{g} u_{0} + \lambda u_{0}^{p-1} - ac \\
\Rightarrow \lvert a\tilde{u}_{1} - a\Delta_{g} \tilde{u}_{1} \rvert & \leqslant \lvert au_{0} - S_{g} u_{0} + \lambda u_{0}^{p-1} \rvert + ac \leqslant au_{0} - S_{g} u_{0} + \lambda u_{0}^{p-1} + ac \\
& \leqslant L u_{0} + ac; \\
\Rightarrow \lVert -a\Delta_{g} \tilde{u}_{1} \rVert_{\calL^{r_{j}}(\Omega, g)} & \leqslant \lVert a\tilde{u}_{1} -a\Delta_{g} \tilde{u}_{1} \rVert_{\calL^{r_{j}}(\Omega, g)} + a \lVert \tilde{u}_{1} \rVert_{\calL^{r_{j}}(\Omega, g)} \\
&  \leqslant L \lVert u_{0} \rVert_{\calL^{r_{j}}(\Omega, g)} + a \lVert \tilde{u}_{1} \rVert_{\calL^{r_{j}}(\Omega, g)} + B_{1}', j = 1, \dotso, M_{n}.
\end{split}
\end{equation}
It follows from (\ref{Euclidean:reg1}) and (\ref{negative:eqn9}) that for
$ j = 1, \dotso, M_{n} $,
\begin{align}\label{negative:eqn10}
\lVert \tilde{u}_{1} \rVert_{W^{2, r_{j}}(\Omega, g)} &\leqslant D_{1}\left( \lVert -a\Delta_{g} \tilde{u}_{1} \rVert_{\calL^{r_{j}}(\Omega, g)} + \lVert \tilde{u}_{1} \rVert_{\calL^{r_{j}}(\Omega, g)} \right)\\
&\leqslant C' \left( L\lVert \tilde{u}_{0} \rVert_{\calL^{r_{j}}(\Omega, g)} + \lVert \tilde{u}_{1} \rVert_{\calL^{r_{j}}(\Omega, g)} + B_{1}' \right).\nonumber
\end{align}
Recalling that Theorem \ref{Euclidean:thm3}(i) implies $\Vert u\Vert_{L^{r_j}(\Omega.g)} \leqslant K \Vert u\Vert_{W^{2, r_{j-1}}(\Omega, g)}$ 
by the construction of the $r_j$, and using (\ref{negative:eqn10}) repeatedly, we have
\begin{align*}
\lVert \tilde{u}_{1} \rVert_{W^{2, r_{j}}(\Omega,g)} & 
\leqslant C'L \lVert u_{0} \rVert_{\mathcal{L}^{r_{j}}(\Omega,g)} + C' \lVert \tilde{u}_{1} \rVert_{\mathcal{L}^{r_{j}}(\Omega,g)} + C'B_{1}' \\
&\leqslant  C'L \lVert u_{0} \rVert_{\mathcal{L}^{r_{j}}(\Omega,g)} + C'K \lVert \tilde{u}_{1} \rVert_{W^{2, r_{j - 1}}(\Omega,g)} + C' B_{1}' \\
&\leqslant  C'L \lVert u_{0} \rVert_{\mathcal{L}^{r_{j}}(\Omega,g)} + (C')^{2} K\left(L\lVert \tilde{u}_{0} \rVert_{\mathcal{L}^{r_{j- 1}}(\Omega,g)} + \lVert \tilde{u}_{1} \rVert_{\mathcal{L}^{r_{j - 1}}(\Omega,g)} + B_{1}' \right) + C'B_{1}' \\
&\leqslant  C'L \lVert u_{0} \rVert_{\mathcal{L}^{r_{j}}(\Omega,g)} + (C')^{2}KL \lVert u_{0} \rVert_{\mathcal{L}^{r_{j - 1}}(\Omega,g)} + (C')^{2} K\lVert \tilde{u}_{1} \rVert_{\mathcal{L}^{r_{j - 1}}(\Omega,g)}\\
&\qquad + (C')^{2}K B_{1}' + C'B_{1}' \\
&\leqslant  \dotso
\end{align*}
for $ j = 1, \dotso, M_{n} $. 
Continuing until the right hand side contains $\Vert \tilde u_1\Vert_{L^{r_1}(\Omega,g)}$ and 
recalling that and $\Vert u\Vert_{L^{r_1}(\Omega.g)} \leqslant K \Vert u\Vert_{W^{2, 2}(\Omega, g)}$, we obtain
\begin{align}
    \label{insert9}
\lVert \tilde{u}_{1} \rVert_{W^{2, r_{j}}(\Omega,g)} & \leqslant L \sum_{l = 0}^{j - 1} K^{l} (C')^{l + 1} \lVert u_{0} \rVert_{\mathcal{L}^{r_{j- l}}(\Omega,g)} +(C'K)^{j} \lVert \tilde{u}_1 \rVert_{W^{2, 2}(\Omega,g)} \\
&\qquad + B_{1}'\sum_{l = 0}^{j - 1} K^{l}(C')^{l+1}, j = 1, \dotso, M_{n}; \nonumber\\
\lVert \tilde{u}_{1} \rVert_{\mathcal{L}^{r_{j}}(\Omega,g)} & \leqslant L \sum_{l = 1}^{j - 1} (C'K)^{l} \lVert u_{0} \rVert_{\mathcal{L}^{r_{j - l}}(\Omega,g)} + K^{j}(C')^{j - 1} \lVert \tilde{u}_{1} \rVert_{W^{2, 2}(\Omega,g)}   \label{insert9a}   \\
&\qquad+ B_{1}'\sum_{l = 0}^{j - 1} (C'K)^{l}, j = 2, \dotso, M_{n}.\nonumber
\end{align}

We now obtain stronger estimates on $\tilde u_1$ and $u_1.$
\medskip

\noindent {\bf Claim:}  We have 
\begin{align}\label{negative:eqn11}
\lVert \tilde{u}_{1} \rVert_{W^{2,2}(\Omega,g)} & \leqslant 1, \lVert \tilde{u}_{1} \rVert_{\calL^{r_{1}}(\Omega,g)} \leqslant K, \lVert u_{1} \rVert_{\calL^{r_{1}}(\Omega,g)} \leqslant K + B_{1}
\end{align}
\begin{align}\label{insert10}
\lVert \tilde{u}_{1} \rVert_{W^{2, r_{j}}(\Omega,g)} & \leqslant (C'K)^{j} \left(L + 1\right)^{j} + L \cdot B_{1} \sum_{l = 0}^{j - 1} K^{l}(C')^{l+1} \left(L + 1\right)^{l}\\
&\qquad + B_{1}' \sum_{l = 0}^{j - 1} K^{l}(C')^{l+1}, j = 1, \dotso, M_{n}; \nonumber
\end{align}
\begin{align}
\label{insert11}
\lVert u_{1} \rVert_{\mathcal{L}^{r_{j}}(\Omega)} & \leqslant K^{j} (C')^{j-1} (L + 1)^{j-1} + L \cdot B_{1} \sum_{l = 1}^{j - 1} \left(C'K\right)^{l} \left( L + 1 \right)^{l -1} + B_{1}, j = 2, \dotso, M_{n}.
\end{align}

\medskip

This is proved in Appendix \ref{claim proof}.
\medskip

The estimates in 
the Claim for $ \tilde{u}_{1}, u_{1} $ are exactly the same as for $ \tilde{u}_{0}, u_{0} $ in (\ref{negative:eqn4}). 
Therefore, we can repeat the estimates in (\ref{negative:eqn8}) and get
\begin{equation}\label{negative:eqn12} u_1 = |u_1| \leqslant C_{M_n}.
\end{equation}
For  $ \lambda $ as in (\ref{negative:eqn8a}), (\ref{negative:eqn12}) implies that
\begin{equation*}
    au_{1} - S_{g} u_{1} + \lambda u_{1}^{p-1} \geqslant 0.
\end{equation*}
It follows (as above for $u_1$) that $ u_{2} > 0 $ is a smooth solution for the second line in (\ref{Euclidean:eqn19}) with $ k = 1 $.
Inductively, we consider the iterated version of (\ref{negative:eqn5}),
\begin{equation}\label{negative:eqn13}
\begin{split}
 au_{k} - a\Delta_{g} u_{k} & = au_{k-1} - S_{g} u_{k-1} + \lambda u_{k-1}^{p-1} \; {\rm in} \; \Omega, u_{k} \equiv c \; {\rm on} \; \partial \Omega; \\
 a\tilde{u}_{k} - a\Delta_{g} \tilde{u}_{k} & = au_{k-1} - S_{g} u_{k-1} + \lambda u_{k-1}^{p-1} - ac \; {\rm in} \; \Omega, \tilde{u}_{k} \equiv 0 \; {\rm on} \; \partial \Omega,
\end{split}
\end{equation}
with the following estimates:
\begin{align}\label{negative:eqn14}
\lVert \tilde{u}_{k-1} \rVert_{W^{2,2}(\Omega)} & \leqslant 1, \lVert \tilde{u}_{k-1} \rVert_{L^{r_{1}}(\Omega)} \leqslant K, \lVert u_{k-1} \rVert_{L^{r_{1}}(\Omega)} \leqslant K + B_{1};\nonumber \\
\lVert \tilde{u}_{k-1} \rVert_{W^{2, r_{j}}(\Omega)} & \leqslant (C'K)^{j} \left(L + 1\right)^{j} + L \cdot B_{1} \sum_{l = 0}^{j - 1} K^{l}(C')^{l+1} \left(L + 1\right)^{l}\nonumber\\
&\qquad + B_{1}' \sum_{l = 0}^{j - 1} K^{l}(C')^{l+1}, j = 1, \dotso, M_{n}; \\
\lVert u_{k-1} \rVert_{\mathcal{L}^{r_{j}}(\Omega)} & \leqslant K^{j} (C')^{j-1} (L + 1)^{j-1} + L \cdot B_{1} \sum_{l = 1}^{j - 1} \left(C'K\right)^{l} \left( L + 1 \right)^{l -1} + B_{1}, j = 2, \dotso, M_{n}; \nonumber\\
u_{k-1} = |u_{k-1}| &\leqslant C_{M_n}.\nonumber
\end{align}
From (\ref{negative:eqn14}), we conclude that for $ \lambda $ in (\ref{negative:eqn8}), we have
\begin{equation*}
    au_{k-1} - S_{g} u_{k-1} + \lambda u_{k -1}^{p-1} \geqslant 0,
\end{equation*}
and so $ u_{k} > 0 $ is a smooth solution of (\ref{negative:eqn13}). 
By induction, 
(\ref{insert9}) and (\ref{insert9a})
are replaced by
\begin{align*}
\lVert \tilde{u}_{k} \rVert_{W^{2, r_{j}}(\Omega,g)} & \leqslant L \sum_{l = 0}^{j - 1} K^{l} (C')^{l + 1} \lVert u_{k-1} \rVert_{\mathcal{L}^{r_{j- l}}(\Omega,g)} +(C'K)^{j} \lVert \tilde{u}_k \rVert_{W^{2, 2}(\Omega,g)}\\ 
&\qquad + B_{1}'\sum_{l = 0}^{j - 1} K^{l}(C')^{l+1}, j = 1, \dotso, M_{n}; \\
\lVert \tilde{u}_{k} \rVert_{\mathcal{L}^{r_{j}}(\Omega,g)} & \leqslant L \sum_{l = 1}^{j - 1} (C'K)^{l} \lVert u_{k-1} \rVert_{\mathcal{L}^{r_{j - l}}(\Omega,g)} + K^{j}(C')^{j - 1} \lVert \tilde{u}_{k} \rVert_{W^{2, 2}(\Omega,g)}\\
&\qquad + B_{1}'\sum_{l = 0}^{j - 1} (C'K)^{l}, j = 2, \dotso, M_{n}.
\end{align*}
Using the estimates in (\ref{negative:eqn14}) and arguing as in Appendix \ref{claim proof}, we conclude that 
(\ref{negative:eqn14}) holds with the index shift $k-1\to k:$
\begin{align}
    \label{negative:eqn14a}
\lVert \tilde{u}_{k} \rVert_{W^{2,2}(\Omega,g)} & \leqslant 1, \lVert \tilde{u}_{k} \rVert_{L^{r_{1}}(\Omega,g)} \leqslant K, \lVert u_{k} \rVert_{L^{r_{1}}(\Omega,g)} \leqslant K + B_{1};
\nonumber\\
\lVert \tilde{u}_{k} \rVert_{W^{2, r_{j}}(\Omega,g)} & \leqslant (C'K)^{j} \left(L + 1\right)^{j} + L \cdot B_{1} \sum_{l = 0}^{j - 1} K^{l}(C')^{l+1} \left(L + 1\right)^{l}\nonumber\\
&\qquad + B_{1}' \sum_{l = 0}^{j - 1} K^{l}(C')^{l+1}, j = 1, \dotso, M_{n}; \\
\lVert u_{k} \rVert_{\mathcal{L}^{r_{j}}(\Omega,g)} & \leqslant K^{j} (C')^{j-1} (L + 1)^{j-1} + L \cdot B_{1} \sum_{l = 1}^{j - 1} \left(C'K\right)^{l} \left( L + 1 \right)^{l -1} + B_{1}, j = 2, \dotso, M_{n}; \nonumber\\
u_k = \lvert u_{k} \rvert & \leqslant C_{M_n}.\nonumber
\end{align}

In summary, the upper bounds in (\ref{negative:eqn14a}) hold for all $ k \in \mathbb{Z}_{\geqslant 0} $ for  fixed $ \lambda $ satisfying (\ref{negative:eqn8a}).
By the argument starting at (\ref{2nd est}), where $\lambda$ must be independent of $k$, we conclude that 
$\{u_k\}$ is a  Cauchy sequence in $ H^{1}(\Omega, g) $. Thus $\lim_{k\to\infty}u_k = u$ 
exists in  $ H^{1}(\Omega, g) $  
(with $ u \in H^{2}(\Omega, g) $ by Lemma \ref{Euclidean:lemma2}), solves the Yamabe equation (\ref{negative:rev2}), and satisfies 
$u \geqslant 0$ a.e.

This finishes Case II and the proof of Theorem 3.1.
\end{proof}
\medskip

We now prove that the positive $ H^{2} $-solution $u$ of (\ref{negative:rev2}) is actually 
smooth. Thus, 
$ u^{\frac{4}{n - 2}} g$ 
is a metric of constant scalar curvature, which solves the Yamabe problem on the small domain $\Omega.$

\begin{theorem}\label{Euclidean:thm6} Let $ (\Omega, g) $ be a Riemmanian domain. 
The weak  solution $ u \in H^{2}(\Omega, g) $ of the Yamabe equation (\ref{negative:rev2}) 
is  smooth. 
\end{theorem}

The proof is similar to Yamabe and Trudinger's original arguments as well as 
the approach in  \cite{PL}, but avoids working with subcritical exponents.

\begin{proof} 
The first step is to show that $u\in \calC^{2,\alpha}(\Omega).$  
By 
Lemma \ref{Euclidean:lemma2}, $ u \in H^{2}(\Omega, g) $. By the GN inequality (Proposition \ref{Euclidean:prop1}), $ \tilde{u} $ and therefore $ u = \tilde{u} + c$ lie in $\mathcal{L}^{r}(\Omega, g) $, where $ r $ satisfies (\ref{Euclidean:eqn7}), {\it i.e.},
\begin{equation}\label{Euclidean:eqn35}
    \frac{1}{r} = \beta \left(\frac{1}{2} - \frac{2}{n}\right) + \frac{1 - \beta}{2}, 0 \leqslant \beta < 1 \Rightarrow \frac{1}{r} = \frac{n - 4\beta}{2n}, 0 \leqslant \beta < 1.
\end{equation}
There are  three cases, depending on $n=$
dim$(M)$.

\noindent {\bf Case I.}   $n=3$ or $4$.
For $ n = 3, 4 $ and an arbitrary $r \geq 2$, there exists $ \beta \in [0, 1) $ such that 
(\ref{Euclidean:eqn35}) holds. 
Since $u^{p-1}\in \calL^{\frac{r}{p-1}}(\Omega, g)
\subset \calL^r(\Omega, g)$, 
$$
-a\Delta_g u  = -Su + \lambda u^{p-1}\in \calL^r(\Omega, g),
$$
for $r\geq 2.$
By Theorem \ref{Euclidean:thm5}(i), $ u \in W^{2, r}(\Omega, g) $.

For $ r\gg 0 $, (\ref{Euclidean:eqn30}) holds for some $ \alpha \in (0, 1) $, and applying Theorem \ref{Euclidean:thm3}(ii) to $u$, we obtain
$    u \in \calC^{0, \alpha}(\Omega). $
By the Schauder estimates in Theorem \ref{Euclidean:thm5}(ii), 
we conclude that
$    u \in \calC^{2, \alpha}(\Omega).$\\

\noindent {\bf Case II.} $n=5$ or $6.$
When $ n \geqslant 5 $, (\ref{Euclidean:eqn35}) gives
\begin{equation}\label{r}
    r = \frac{2n}{n - 4 \beta}, 0 \leqslant \beta < 1 \Rightarrow r = \frac{2n}{n - 4} - \epsilon,
\end{equation}
where $ \epsilon > 0 $ can be arbitrarily small by choosing $ \beta $ close to $ 1 $. 
In particular, for $ \epsilon $  small enough,  $ r > p = \frac{2n}{n - 2} $.

As in the previous case, we have $\tilde u\in \calL^r\subset \calL^{\frac{r}{p-1}}$ and $-\Delta_g u
\in \calL^{\frac{r}{p-1}}$, so elliptic regularity (Theorem (\ref{Euclidean:thm5})(i)) implies
$  u \in W^{2, \frac{r}{p-1}}(\Omega, g). $
The Sobolev embedding condition (\ref{Euclidean:eqn29}) implies
\begin{equation}\label{Euclidean:eqn36}
    u \in \mathcal{L}^{r'}(\Omega, g),\ {\rm for}\ \frac{p - 1}{r} - \frac{2}{n}  \leq 
    \frac{1}{r'}.
\end{equation}
\noindent When $ n = 5 $,  (\ref{Euclidean:eqn36}) holds for any 
$r' \geq 1$; when $ n = 6 $, (\ref{Euclidean:eqn36}) holds for $ r' \gg 0 $.  
We again conclude that
$    u \in \calC^{2, \alpha}(\Omega).$\\

\noindent {\bf Case III.} $n\geq 7.$ The case of equality in (\ref{Euclidean:eqn36}) is
\begin{equation*}
r' = \frac{nr}{np - n - 2r}.
\end{equation*}
Plugging in $r$ from (\ref{r}) and using $p = \frac{2n}{n-2}$, we get  
$$ r' -r = \frac{16}{(n-6)(n-4)} +2\epsilon' >0, 
$$
for $n \geq 7 $ and some $ \epsilon' \ll 1 $.   
As above, $u\in\calL^{r'}$ implies $u\in W^{2, \frac{r'}{p - 1}}(\Omega).$  Then solving
\begin{equation}\label{insert2}
\frac{p-1}{r'} - \frac{2}{n} = \frac{1}{r''},
\end{equation}
    we obtain $ r'' > r' > r > p$ and  $u \in W^{2, \frac{r''}{p-1}}(\Omega, g).$
Plugging  (\ref{Euclidean:eqn36}) for $1/r'$ into (\ref{insert2}), we get 
$ \frac{1}{r''} 
= \frac{(p - 1)^{2}}{r} - ( 1 + (p -1)) \frac{2}{n} $. Iterating this process, after $ M $ steps we find that $ u \in \mathcal{L}^{\tilde{r}}(\Omega, g) $ where $ \tilde{r} $ satisfies
\begin{equation*}
\begin{split}
    \frac{1}{\tilde{r}} & \geqslant \frac{( p -1)^{M}}{r} - \left( \sum_{m = 0}^{M - 1} ( p-1)^{m}\right) \cdot \frac{2}{n} 
     = \frac{( p -1)^{M}}{r} - \frac{(( p -1)^{M} - 1 )}{p-2}\cdot \frac{2}{n} \\ 
& = \frac{( p -1)^{M}}{r} - \frac{( p -1)^{M} - 1 }{p} \ ( \text{since $ (p-2)\frac{n}{2} = p $}) \\
    & = (p-1)^{M}\left( \frac{1}{r} - \frac{1}{p} \right) + \frac{1}{p}.
\end{split}
\end{equation*}
Since $(1/r) - (1/p) <0$, the last line is negative for $M\gg 0.$
We conclude that
$    u \in W^{2. q}(\Omega, g) \ {\rm for}\  q \gg 1. $
 It follows from Theorem \ref{Euclidean:thm3}(ii) that $ u \in \calC^{0, \alpha}(\Omega) $ for some $ \alpha \in (0, 1) $. As above, we obtain
$    u \in \calC^{2, \alpha}(\Omega).$\\

Thus in all cases, we have $u \in \calC^{2, \alpha}(\Omega).$ Using the Schauder estimates in Theorem \ref{Euclidean:thm5}(ii) and the limiting arguments involving $ \tilde{u} $ and $ \lbrace w_{n} \rbrace $ above, we bootstrap to get
$u \in \calC^{\infty}(\Omega).$ Since $ u \geqslant 0 $ a.e., smoothness says $ u \geqslant 0 $ in $ \Omega $. In fact,  $u\in \calC(\bar\Omega).$  Clearly $ u \in W^{2, p}
(\Omega)$ for  $ p \gg 0 $. By the Extension Proposition \ref{Euclidean:propext}, $ Eu \in W^{2,p}(\R^n) $, and thus $ Eu $ is continuous by  
a Sobolev 
embedding theorem. Since $ Eu = u $ a.e. on $ \Omega $,  we can extend $ u $ continuously to $\partial\Omega$ by $ Eu $.
From $ u \equiv c >0$ on $ \partial \Omega $ and $ u \in \calC^{\infty}(\Omega) \cap \calC^{0}(\bar{\Omega)},$ it follows from the strong maximum principle that 
$ u > 0 $.   
\end{proof}

\begin{remark}\label{Euclidean:re4}
In the classical approach, one proves $u_\epsilon>0$ for solutions to the Yamabe problem at subcritical exponents $\epsilon$; the main problem is to show that the weak limit $u$ of the $u_\epsilon$ is not identically $ 0$ at the critical exponent.  In our case, since $ u \equiv c > 0 $ on $ \partial \Omega $, we immediately see that $u$  is nontrivial.
\end{remark}

\begin{remark} \label{final remarkk}
We can always change the boundary condition to $c=1$ by scaling $u$ to $c^{-1}u$, which scales $\lambda$ to $c^{p-2}\lambda.$  This may force us to shrink $\Omega$ due to (29).  The advantage is that the new constant scalar curvature metric $\tilde g$ associated to $c^{-1}u$ equals $g$ at $\partial \Omega$.
Thus we  solve the Yamabe problem while keeping the scalar curvature of $(\partial \Omega, g|_{\partial \Omega})$ unchanged.
\end{remark}

 \appendix

\section{Proof of the claim}\label{claim proof} 

\noindent {\bf Claim:}  We have 
\begin{align}\label{negative:eqn11a}
\lVert \tilde{u}_{1} \rVert_{W^{2,2}(\Omega,g)} & \leqslant 1, \lVert \tilde{u}_{1} \rVert_{\calL^{r_{1}}(\Omega,g)} \leqslant K, \lVert u_{1} \rVert_{\calL^{r_{1}}(\Omega,g)} \leqslant K + B_{1}
\end{align}
\begin{align}\label{insert10a}
\lVert \tilde{u}_{1} \rVert_{W^{2, r_{j}}(\Omega,g)} & \leqslant (C'K)^{j} \left(L + 1\right)^{j} + L \cdot B_{1} \sum_{l = 0}^{j - 1} K^{l}(C')^{l+1} \left(L + 1\right)^{l}\\
&\qquad + B_{1}' \sum_{l = 0}^{j - 1} K^{l}(C')^{l+1}, j = 1, \dotso, M_{n}; \nonumber
\end{align}
\begin{align}
\label{insert11a}
\lVert u_{1} \rVert_{\mathcal{L}^{r_{j}}(\Omega)} & \leqslant K^{j} (C')^{j-1} (L + 1)^{j-1} + L \cdot B_{1} \sum_{l = 1}^{j - 1} \left(C'K\right)^{l} \left( L + 1 \right)^{l -1} + B_{1}, j = 2, \dotso, M_{n}.
\end{align}

 \begin{proof}   
The three parts of (\ref{negative:eqn11a}) follow from (i) applying (\ref{Euclidean:eqn18}); (ii) the first line of (\ref{insert8}) with $\tilde u_0$ replaced with $\tilde u_1$; (iii) 
$$\Vert  u_1\Vert_{\calL^{r_1}(\Omega,g)} \leqslant \Vert \tilde u_1\Vert_{\calL^{r_1}(\Omega,g)} + \Vert c\Vert_{\calL^{r_1}(\Omega,g)}
\leqslant K + c\ \vol(\Omega)^{1/r_1} \leqslant K + B_1.$$

For (\ref{insert10a}), we recall (\ref{insert9}):
\begin{align*}\tag{71}
\lefteqn{\lVert \tilde{u}_{1} \rVert_{W^{2, r_{j}}(\Omega,g)} }\\ 
& \leqslant L \sum_{l = 0}^{j - 1} K^{l} (C')^{l + 1} \lVert u_{0} \rVert_{\mathcal{L}^{r_{j- l}}(\Omega,g)} +(C'K)^{j} \lVert \tilde{u}_1 \rVert_{W^{2, 2}(\Omega,g)} 
 + B_{1}'\sum_{l = 0}^{j - 1} K^{l}(C')^{l+1}, j = 1, \dotso, M_{n}.
\end{align*}
The last terms in (\ref{insert9}) and (\ref{insert10a}) are equal. Insert the estimate for $\Vert \tilde u_0\Vert_{\calL^{r_{j-l}}(\Omega,g)}$ in (\ref{negative:eqn4}) into the first term on the right hand side of (\ref{insert9}).  Since $\lVert \tilde{u}_{1} \rVert_{W^{2,2}(\Omega,g)}  \leqslant 1$, we get
\begin{align}
\lefteqn{ \lVert \tilde{u}_{1} \rVert_{W^{2, r_{j}}(\Omega,g)} }
\nonumber\\
& \leqslant L \sum_{l = 0}^{j - 2} K^{l} (C')^{l + 1}
\left[ K^{j-l} (C')^{j-l-1} (L + 1)^{j-l-1} + L \cdot B_{1} \sum_{s = 1}^{j -l- 1} \left(C'K\right)^{s}
\left( L + 1 \right)^{s -1} + B_{1}\right]\label{insert12}\\
&\qquad + LK^{j-1}(C')^j(K+B_1)
+ (C'K)^{j}   + B_{1}'\sum_{l = 0}^{j - 1} K^{l}(C')^{l+1} \nonumber \\    
& =  L(KC')^j (L+1)^{j-1} \sum_{l=0}^{j-1}(L+1)^{-l} + L^{2} B_{1} \sum_{l = 0}^{j - 2} K^{l}(C')^{l +1} \sum_{ s= 1}^{j - l - 1} (C'K)^{s}(L + 1)^{s - 1} \label{insert12a}\\
&\qquad + LB_{1} \sum_{l = 0}^{j - 1}K^{l}(C')^{l + 1}
+ (C'K)^{j}   + B_{1}'\sum_{l = 0}^{j - 1} K^{l}(C')^{l+1}. \nonumber
 \end{align}
 The fourth term on the RHS of (\ref{insert12}) involves the estimate for 
$\Vert \tilde u_0\Vert_{\calL^{r_{1}}(\Omega,g)}$ in (\ref{negative:eqn4}), as the estimate on the last line of (\ref{negative:eqn4}) is valid for $j \geqslant 2.$ To pass from (\ref{insert12}) to (\ref{insert12a}), we use (i) the first term on the RHS
 of (\ref{insert12a}) combines the first term on the RHS of (\ref{insert12}) with the subterm 
 $LK^{j-1}(C')^j K$ in the fourth term on the RHS of (\ref{insert12}); (ii) the third term on the RHS of 
 (\ref{insert12a}) combines the term $L \left(\sum_{l = 0}^{j - 2} K^{l} (C')^{l + 1}\right)B_1$ at the end of the first line in (\ref{insert12}) with the subterm 
 $LK^{j-1}(C')^j B_1$ in the fourth term on the RHS of (\ref{insert12}).

The first term on the right hand side of (\ref{insert12a}) satisfies
\begin{align*} 
 L(KC')^j (L+1)^{j-1} \sum_{l=0}^{j-1}(L+1)^{-l}
 &=  L(C'K)^{j} \frac{(L + 1)^{j} - 1}{L + 1 - 1} = (C'K)^{j}(L + 1)^{j} - (C'K)^{j}.
\end{align*}

Thus (\ref{insert12a}) becomes
\begin{align}\label{insert13}
\lVert \tilde{u}_{1} \rVert_{W^{2, r_{j}}(\Omega,g)} &\leqslant (C'K)^j (L+1)^j 
+ L^{2} B_{1} \sum_{l = 0}^{j - 2} K^{l}(C')^{l +1} \sum_{ s= 1}^{j - l - 1} (C'K)^{s}(L + 1)^{s - 1}\\
&\qquad + LB_{1} \sum_{l = 0}^{j - 1}K^{l}(C')^{l + 1}
   + B_{1}'\sum_{l = 0}^{j - 1} K^{l}(C')^{l+1}. \nonumber
\end{align}

We simplify the second and the third terms on the RHS of (\ref{insert13}) by expanding out the second term in powers of $l$ and then collecting powers of $KC'$:
\begin{align}\label{insert14}
\MoveEqLeft{
 L^{2} B_{1} \sum_{l = 0}^{j - 2} K^{l}(C')^{l +1} \sum_{ s= 1}^{j - l - 1} (C'K)^{s}(L + 1)^{s - 1}
+ LB_{1} \sum_{l = 0}^{j - 1}K^{l}(C')^{l + 1} }\nonumber\\
&= C'L^2B_1\left[ \sum_{\genfrac{}{}{0pt}{1}{s=1}{(l=0)} }
^{j-1} (KC')^s(L+1)^{s-1} + 
KC'\sum_{\genfrac{}{}{0pt}{1}{s=1}{(l=1)} }
^{j-2} (KC')^s(L+1)^{s-1}\right.\nonumber\\
&\qquad \left. + (KC')^2\sum_{\genfrac{}{}{0pt}{1}{s=1}{(l=2)} }
^{j-3} (KC')^s(L+1)^{s-1}+\ldots + (KC')^{j-2}
\sum_{\genfrac{}{}{0pt}{1}{s=1}{(l=j)} }
^{1} (KC')^s(L+1)^{s-1}
\right]\nonumber\\
&\qquad + C'LB_1\left(\frac{(KC')^j-1}{KC'-1}\right)\\
&= C'L^2B_1\left[C'K + (C'K)^2((L+1)+1)  \right.\nonumber\\
&\qquad  \left.+ \ldots + (C'K)^{j-1}((L+1)^{j-2} + (L+1)^{j-2}+\ldots + 1)\right] + C'LB_1\left(\frac{(KC')^j-1}{KC'-1}\right)\nonumber\\
&= C'L^2B_1 \sum_{l=1}^{j-1} (KC')^l\left(\frac{(L+1)^l-1}{L+1-1}\right) +
C'LB_1\left(\frac{(KC')^j-1}{KC'-1}\right)\nonumber\\
&= C'LB_1 \left[\sum_{l=1}^{j-1} (KC')^l(L+1)^l-\sum_{l=1}^{j-1} (KC')^l\right] + C'LB_1\left(\frac{(KC')^j-1}{KC'-1}\right)\nonumber\\
&= C'LB_1 \left[\sum_{l=1}^{j-1} (KC')^l(L+1)^l\right].\nonumber
\end{align}
Plugging (\ref{insert14}) into (\ref{insert13}) gives
\begin{align*}
\lVert \tilde{u}_{1} \rVert_{W^{2, r_{j}}(\Omega,g)} & \leqslant (C'K)^{j} \left(L + 1\right)^{j} + L \cdot B_{1} \sum_{l = 0}^{j - 1} K^{l}(C')^{l+1} \left(L + 1\right)^{l}
 + B_{1}' \sum_{l = 0}^{j - 1} K^{l}(C')^{l+1}, 
\end{align*}
which is (\ref{insert10a}).

The proof of (\ref{insert11a}) is similar. We plug  the last line of (\ref{negative:eqn4}) into (\ref{insert9a}) and proceed as above. \end{proof}

\section{Table of constants}\label{constants}

${}$

\begin{center}
\begin{tabular}{|c|c|}
\hline
{\bf Constant}  & {\bf First appearance}\\
\hline
$a,  p, c, \lambda$ & Below (\ref{Euclidean:eqn1})\\
\hline
$C_1, C_2$ & (\ref{Euclidean:eqn5})\\
\hline
$C_{m,j,q,r,\alpha}$ & (\ref{Euclidean:eqn6})\\
\hline
$K(k,p)$ 
& (\ref{Euclidean:ext})\\
\hline
$\lambda_1$ & (\ref{Euclidean:eqn8})\\
\hline
$C^*$
& (\ref{Euclidean:eqn9})\\
\hline
$\kappa$
& Theorem \ref{Euclidean:thm1}\\
\hline
$C$ & (\ref{Euclidean:eqn13})\\
\hline
\rule{0pt}{3ex}
$\tilde C$ & (\ref{insert1d})\\
\hline
$C_0$ & (\ref{insert1c})\\
\hline
\end{tabular}
\qquad
\begin{tabular}{|c|c|}
\hline
{\bf Constant}  & {\bf First appearance}\\
\hline
$B$ & (\ref{Euclidean:eqn16})\\
\hline
$A$ & (\ref{Adef})\\
\hline
$K$ & (\ref{Euclidean:eqn29a})\\
\hline 
$K'$ & (\ref{Euclidean:eqn30a}) \\
\hline
$D_1, D_2$ & Theorem \ref{Euclidean:thm5}\\
\hline
$L$ & (\ref{insert7})\\
\hline
$M_n$ & (\ref{M_n})\\
\hline
\rule{0pt}{3ex}
$C', B_1', B_1$ & (\ref{insert6})\\ 
\hline
$C_{M_n}$ & (\ref{negative:eqn8})\\
\hline
\end{tabular}

\end{center}
${}$
\bigskip



\bibliographystyle{plain}
\bibliography{Yamabe}
\vskip 0.2 in

\end{document}